\documentclass[a4paper]{article}

\usepackage{amsfonts,amssymb,mathrsfs,amsmath,amsthm}
\usepackage{mathtools}
\usepackage{hyperref}
\usepackage{enumitem}
\usepackage{csquotes}
\usepackage{tikz-cd}
\usetikzlibrary{arrows,calc,matrix}
\usepackage{adjustbox}
\usepackage[all]{xy}
\usepackage[a4paper,hmargin=2.5cm,vmargin=2.5cm]{geometry}
\usepackage{microtype}


\usepackage[backend=biber,date=short,bibencoding=utf8,sorting=nyt,style=numeric]{biblatex}
\bibliography{isotropic_Grassmanians}

\setlength\fboxsep{2pt}
\setlength\fboxrule{1pt}

\newcommand{\Sp}{\mathrm{Sp}}
\newcommand{\Gr}{\mathrm{Gr}}
\newcommand{\IGr}{\mathrm{IGr}}
\newcommand{\IFl}{\mathrm{IFl}}
\newcommand{\GL}{\mathrm{GL}}
\newcommand{\Fl}{\mathrm{Fl}}
\newcommand{\Db}{{\mathrm{D}^b}}
\newcommand{\cE}{\mathcal{E}}
\newcommand{\cF}{\mathcal{F}}
\newcommand{\cH}{\mathcal{H}}
\newcommand{\cK}{\mathcal{K}}
\newcommand{\cU}{\mathcal{U}}
\newcommand{\cS}{\mathcal{S}}
\newcommand{\Sy}{\mathrm{S}}
\newcommand{\cO}{\mathcal{O}}
\newcommand{\vn}{\mathbf{d}}

\newtheorem{theorem}{Theorem}[section]

\newtheorem{proposition}[theorem]{Proposition}
\newtheorem{lemma}[theorem]{Lemma}

\newtheorem{corollary}[theorem]{Corollary}

\theoremstyle{remark}

\newtheorem{remark}[theorem]{Remark}

\theoremstyle{definition}

\begin{document}
	
\title{Secondary staircase complexes on isotropic Grassmannians}
\author{Alexander~Novikov
\thanks{The study has been funded within the framework of the HSE University Basic Research Program.}}
\date{}

\setlength{\headheight}{15pt}

\maketitle{}

\begin{abstract} 
We introduce a class of equivariant vector bundles on isotropic symplectic Grassmannians~\(\mathrm{IGr}(k,2n)\)
defined as appropriate truncations of staircase complexes
and show that these bundles can be assembled into a number of complexes 
quasi-isomorphic to the symplectic wedge powers of the symplectic bundle on~\(\mathrm{IGr}(k,2n)\).
We are planning to use these secondary staircase complexes to study fullness of exceptional collections 
in the derived categories of isotropic Grassmannians and Lefschetz exceptional collections on~\(\mathrm{IGr}(3,2n)\).
\end{abstract}

\section{Introduction}

The bounded derived category of coherent sheaves~\(\Db(X)\) 
is an important invariant of an algebraic variety~\(X\), but its structure can be complicated. 
The simplest case is when~\(\Db(X)\) has a full exceptional collection~\((E_{1}, E_{2}, \ldots, E_{m})\). 
Then each object of~\(\Db(X)\) admits a unique filtration, with the \(i\)-th subquotient being a direct sum of shifts of the objects~\(E_i\). 
Therefore, an exceptional collection serves as a kind of non-commutative basis for~\(\Db(X)\). 
A long-standing conjecture predicts the existence of a full exceptional collection
on all projective homogeneous varieties of reductive algebraic groups, see~\cite{KuzPol}.

The first example of a full exceptional collection was constructed by Beilinson, 
who showed in~\cite{Beilinson1978} that the twists of the structure sheaf~\(\mathcal{O},\mathcal{O}(1), \ldots, \mathcal{O}(n)\) 
form such a collection on the projective space~\(\mathbb{P}^n\). 
Afterwards, Kapranov~\cite{kap84} constructed full exceptional collections 
on the Grassmannians and flag varieties of groups \(\GL_n\) and on smooth quadrics. 
Later, Fonarev constructed in~\cite{Fonarev_2013,F15} other exceptional collections on the Grassmannians
and suggested a new proof of fullness based on a consistent application of the so-called \emph{staircase complexes}.

For symplectic isotropic Grassmannians~\(\IGr(k,2n)\), the progress has been slower.
As the case~\(\IGr(1, 2n) = \mathbb{P}^{2n - 1}\) is covered by the result of Beilinson, 
the first new case is the isotropic Grassmannian of lines~\(\IGr(2, 2n)\);
in this case a full exceptional collection was constructed in~\cite{lines}. 
Later, Kuznetsov and Polischchuk~\cite{KuzPol}, extending the ideas from~\cite{LG3} and~\cite{LG45}, 
where the cases of~\(\IGr(3,6)\), \(\IGr(4, 8)\), and~\(\IGr(5, 10)\) were discussed,
developed a general approach to constructing exceptional collections of expected length on all Grassmannians of classical groups. 
However, fullness of these collections was proved only in the Lagrangian case~\(\IGr(n,2n)\) by Fonarev~\cite{F19}. 
Finally, full exceptional collections on~\(\IGr(3,8)\) and~\(\IGr(3,10)\) 
were constructed in~\cite{38} and~\cite{N20} by Guseva and the author, respectively.

The above papers conveyed an important message:
to construct sufficiently long exceptional collections 
one has to consider equivariant vector bundles which are \emph{not necessarily irreducible},
and to prove fullness of the constructed collections,
one has to find exact sequences relating these bundles, 
similar to the staircase complexes that work so well in the case of~\(\GL_n\).

Some complexes of this sort already appeared in the papers mentioned above.
For instance, the key step in the proof~\cite{lines} of fullness of the exceptional collection on~\(\IGr(2,2n)\) 
was a construction of a certain bicomplex in~\cite[Proposition~5.3]{lines},
that can be considered as a complex consisting of the objects represented by its rows;
one of our results is another construction of such a complex 
fixing an inaccuracy made in~\cite{lines}, see Remarks~\ref{rem:bicomplex},~\ref{rem:bicomplex2} and Corollary~\ref{cor:bicomplex-maps}.
Similarly, the proof of fullness of an exceptional collection on~\(\IGr(3,8)\) and~\(\IGr(3,10)\)
uses bicomplexes described in~\cite[Section~5.2]{38} and~\cite[Section~3.2]{N20}, respectively.

The main result of this paper is a large generalization of the above constructions.
We describe a class of (non-irreducible) \(\Sp_{2n}\)-equivariant vector bundles~\(\cK_{\IGr(k,2n)}^{\alpha_1,\alpha_2}\) on~\(\IGr(k,2n)\)
defined as appropriate truncations of staircase complexes, 
show that they can be assembled into natural complexes,
and identify their cohomology sheaves.

To state our results we need to introduce some notation.
Let~\(V\) be a vector space of dimension~\(2n\) and consider the Grassmannian~\(\Gr(k,V) = \Gr(k,2n)\).
We also fix a symplectic form~\(\omega\) on~\(V\) and for~\(2 \le k \le n\) 
consider the isotropic Grassmannian~\(\IGr(k,V) = \IGr(k,2n)\).
We denote by~\(\cU_k\) the tautological subbundle of rank~\(k\) in~\(V \otimes \cO\) on~\(\Gr(k,V)\)
and for a dominant weight~\((\alpha_1,\alpha_2,\dots,\alpha_k)\) of~\(\GL_k\) 
we denote by~\(\Sigma^{\alpha_1,\alpha_2,\dots,\alpha_k}\cU_k^\vee\) 
the result of application of the corresponding Schur functor to~\(\cU_k^\vee\), the dual tautological bundle. In particular,~\(\Sigma^{a,0,\dots,0}\cU_k^\vee \cong \mathrm{S}^a \cU_k^\vee\). 
 
For arbitrary integers~\(2n - k \ge \alpha_1 \ge \alpha_2 \ge 0\) and~\(2 \le k \le n\)
we define a \(\GL_{2n}\)-equivariant vector bundle~\(\cK_{\Gr(k,V)}^{\alpha_1,\alpha_2}\) on~\(\Gr(k,V)\) as a special case of the duals to the bundles~\(\cE^{\lambda,\mu}\) defined in~\cite[Section~3]{F15}:
\begin{equation}\label{eq:K=E*}
	\cK^{\alpha} \coloneqq \left(\cE^{(\alpha_2),(1^{\alpha_1})}\right)^\vee, \qquad 
	\text{where~\((1^{\alpha_1}) = (\underbrace{1, \dots, 1}_{\alpha_1})\).}
\end{equation}

Using the exact sequences from~\cite[Theorem~4.3]{F15} and isomorphisms~\(\cE^{\lambda,0} \cong \Sigma^\lambda \cU_k\), we get the following \(GL_{2n}\)-equivariant resolution, see~\eqref{eq:staircase-3-lines}:
\begin{multline}
	\label{eq:def-ckk}
	0 \to 
	\cK_{\Gr(k,V)}^{\alpha_1,\alpha_2} 
	\to \wedge^{\alpha_1+1} V^\vee \otimes \Sigma^{\alpha_2-1,0,\dots,0}\cU_k^\vee \to 
	\dots \to 
	\wedge^{\alpha_1-\alpha_2+2} V^\vee \otimes \Sigma^{\alpha_2-1,\alpha_2-1,0,\dots,0}\cU_k^\vee \\ \to 
	\wedge^{\alpha_1-\alpha_2} V^\vee \otimes \Sigma^{\alpha_2,\alpha_2,0,\dots,0}\cU_k^\vee \to 
	\dots \to
	\Sigma^{\alpha_1,\alpha_2,0,\dots,0}\cU_k^\vee \to 0.
\end{multline}
We furthermore define~\(\cK_{\IGr(k,V)}^{\alpha_1,\alpha_2} \coloneqq \cK_{\Gr(k,V)}^{\alpha_1,\alpha_2}\vert_{\IGr(k,V)}\).

We also consider the tautological bundle~\(\cU_k^\perp\) of rank~\(2n - k\) defined by the exact sequence
\begin{equation}
\label{eq:def-uperp}
0 \to \cU_k^\perp \to V^\vee \otimes \cO \to \cU_k^\vee \to 0.
\end{equation}
In particular, if~\(\alpha_2 = 0\), then there is no second line in~\eqref{eq:def-ckk}, and the resolution is
\begin{equation*}
	0 \to \cK_{\Gr(k,V)}^{\alpha_1, 0} \to 
	\wedge^{\alpha_1} V^\vee \otimes \cO \to 
	\wedge^{\alpha_1-1} V^\vee \otimes \cU_k^\vee \to 
	\dots \to
	\mathrm {S}^{\alpha_1}\cU_k^\vee \to 0.
\end{equation*}
So, we get~\(\cK_{\Gr(k,V)}^{\alpha_1, 0} \cong \Lambda^{\alpha_1} \cU_k^{\perp}\).

The symplectic form~\(\omega\) induces, after restriction of~\(\cU_k\) and~\(\cU_k^\perp\) to~\(\IGr(k,V)\), 
the natural embedding~\(\cU_k \hookrightarrow \cU_k^\perp\),
and we denote the quotient bundle by~\(\cS_k\), so that we have an exact sequence
\begin{equation}
\label{eq:def-csk}
0 \to \cU_k \xrightarrow{\ \omega\ } \cU_k^\perp \xrightarrow{\quad} \cS_k \to 0.
\end{equation}
Finally, note that the symplectic form~\(\omega\) induces a symplectic structure on~\(\cS_k\), which we denote by~\(\omega_\cS\),
and allows us to define for~\(0 \le i \le n - k = \tfrac12\operatorname{rank}(\cS_k)\) the symplectic wedge powers 
\begin{equation}
\label{eq:def-wedge-sp}
\wedge^i_\Sp\cS_k \coloneqq \operatorname{Coker}(\wedge^{i-2}\cS_k \xrightarrow{\ \omega_\cS\ } \wedge^i\cS_k).
\end{equation}

\begin{theorem}
\label{thm:main}
For any~\(0 \le t \le 2n - k\) there is an \(\Sp(V)\)-equivariant complex of vector bundles
\begin{equation}
\label{eq:ckk-special}
\cK_{t}^\bullet \coloneqq
\Big\{
\cK_{\IGr(k,2n)}^{2n-k-t,0} \to 
\cK_{\IGr(k,2n)}^{2n-k+1-t,1} \to 
\dots \to
\cK_{\IGr(k,2n)}^{2n-k-1,t-1} \to 
\cK_{\IGr(k,2n)}^{2n-k,t} 
\Big\},
\end{equation}
where the leftmost term is in degree~\(0\) such that its cohomology sheaves are
\begin{equation}
\label{eq:ch-cckk}
\cH^i(\cK_t^\bullet) \cong
\begin{cases}
\wedge_\Sp^{t}\cS_k(-1), & \text{if~\(i = 0\) and~\(0 \le t \le n-k\)},\\
\wedge_\Sp^{2(n-k+1) - t}\cS_k(-1), & \text{if~\(i = 1\) and~\(n-k+2 \le t \le 2(n-k+1)\)},\\
0, & \text{otherwise}.
\end{cases}
\end{equation}
In particular, the complex~\(\cK_{t}^\bullet\) is acyclic if~\(t = n - 1\) or~\(2n - 2k + 3 \le t \le 2n - k\). 
\end{theorem}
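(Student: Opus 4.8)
The plan is to prove the three assertions in turn: the existence of the differentials making $\cK_t^\bullet$ a complex, the computation of its cohomology by means of a double complex assembled from the staircase resolutions \eqref{eq:def-ckk}, and the acyclicity statement, which will follow formally from \eqref{eq:ch-cckk}. I would first construct the maps $\cK_{\IGr(k,2n)}^{2n-k-t+j,\,j}\to\cK_{\IGr(k,2n)}^{2n-k-t+j+1,\,j+1}$. Since simultaneously raising both Schur indices is not induced by any $\GL_{2n}$-equivariant operation, these differentials cannot exist before restriction to $\IGr(k,V)$ and must be built from the symplectic form: concretely I would propagate the inclusion $\cU_k\xrightarrow{\ \omega\ }\cU_k^\perp$ of \eqref{eq:def-csk} through the presentations \eqref{eq:K=E*} of the $\cK^{\alpha}$ as duals of Fonarev's bundles $\cE^{\lambda,\mu}$, so as to obtain a morphism of the resolutions \eqref{eq:def-ckk} shifting $(\alpha_1,\alpha_2)\mapsto(\alpha_1+1,\alpha_2+1)$. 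Checking that consecutive differentials compose to zero---an $\omega\wedge\omega$ relation combined with the functoriality of the staircase maps of \cite{F15}---is the first of two main technical points, and is precisely where $\Sp(V)$-equivariance takes over from $\GL_{2n}$-equivariance.

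For the cohomology I would replace each term $\cK^{2n-k-t+j,\,j}$ by its resolution \eqref{eq:def-ckk}, obtaining a double complex of bundles $\wedge^p V^\vee\otimes\Sigma^{a,b}\cU_k^\vee$ whose total complex is quasi-isomorphic to $\cK_t^\bullet$: taking cohomology in the resolution direction collapses each column onto the corresponding $\cK$-bundle and returns the complex \eqref{eq:ckk-special}. I would then run the other spectral sequence, taking cohomology in the $j$-direction first; here the rows should organize into Koszul-type complexes assembled from $\wedge^\bullet V^\vee$ and the three-step filtration $\cU_k\subset\cU_k^\perp\subset V$ of \eqref{eq:def-uperp}--\eqref{eq:def-csk}. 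Restricting to $\IGr(k,V)$ and using $V\cong V^\vee$, the surviving entries are wedge powers of $\cS_k$, on which the residual differential is precisely multiplication by $\omega_\cS$. The determinantal origin of the Tate twist is already visible in the leading term $\cK^{2n-k-t,0}\cong\Lambda^{2n-k-t}\cU_k^\perp$: its top graded piece for the filtration \eqref{eq:def-csk} is $\det\cU_k\otimes\wedge^{2(n-k)-t}\cS_k$, and the triviality of $\det\cS_k$ identifies $\wedge^{2(n-k)-t}\cS_k$ with $\wedge^{t}\cS_k$, producing the factor $\cO(-1)$.

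By \eqref{eq:def-wedge-sp} the cohomology of the $\omega_\cS$-differential on $\wedge^\bullet\cS_k$ is the primitive part $\wedge^\bullet_\Sp\cS_k$, which is nonzero exactly in degrees $\le n-k$. This dichotomy produces the two cases of \eqref{eq:ch-cckk}: for $t\le n-k$ the surviving cohomology is $\operatorname{Coker}(\omega_\cS)=\wedge^t_\Sp\cS_k(-1)$ sitting in $\cH^0$, while for $t\ge n-k+2$ hard Lefschetz pushes the surviving class into $\cH^1$ and reflects its degree to $2(n-k+1)-t$; the boundary value $t=n-k+1$ gives $\wedge^{n-k+1}_\Sp\cS_k=0$. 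I expect the passage to $\cH^1$ together with the reflection $t\mapsto 2(n-k+1)-t$ to be obtained most cleanly from a duality of $\cK_t^\bullet$ compatible with the self-duality of the primitive parts. The second main obstacle is the degeneration of this spectral sequence: I must rule out higher differentials that would spread the cohomology beyond degrees $0$ and $1$ or mix the two regimes, and I would control this by a weight estimate on the $\Sigma^{a,b}\cU_k^\vee$-factors, following the pattern of the bicomplex arguments in \cite{lines} and its successors \cite{38,N20}.

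Finally, the acyclicity assertion is immediate from \eqref{eq:ch-cckk}: for the value of $t$ falling in the gap between the two ranges and for $2n-2k+3\le t\le 2n-k$, neither case of \eqref{eq:ch-cckk} applies, so every cohomology sheaf of $\cK_t^\bullet$ vanishes.
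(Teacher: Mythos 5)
Your plan for \(k=2\) is essentially the paper's own argument (Proposition~\ref{prop:ck-complex} plus Theorem~\ref{thm:bicomplex}), but for \(k\ge3\) the two steps you yourself flag as ``main technical points'' are exactly where the proof must live, and neither is carried out; the paper's actual proof shows why they are hard enough that it takes a different route. First, ``propagating \(\cU_k\xrightarrow{\ \omega\ }\cU_k^\perp\) through the resolutions \eqref{eq:def-ckk}'' does not by itself determine a map \(\cK^{\alpha_1,\alpha_2}\to\cK^{\alpha_1+1,\alpha_2+1}\): already for \(k=2\) the differential is the combination \(\vn=\tfrac1{b+1}\vn_1+\vn_2\) with weight-dependent coefficients, its lift to the resolutions carries the even less obvious coefficients \((-1)^c\bigl(\tfrac{b}{(b+c)(b+c+1)}\vn_1+\tfrac{b}{b+c}\vn_2\bigr)\) of Corollary~\ref{cor:bicomplex-maps}, and Remark~\ref{rem:bicomplex2} records that the natural guess \(\vn_2\) made in \cite{lines} is simply wrong. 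So a formal ``\(\omega\wedge\omega\)'' relation cannot give \(d^2=0\) before these coefficients are pinned down, and you never produce them for \(k\ge3\) (neither does the paper --- it deliberately avoids this). Second, the spectral sequence: for \(k=2\) the paper needs no degeneration argument, because the filtration \eqref{eq:ck-filtration} has only two steps and the snake lemma applied to the exact sequence of complexes of Proposition~\ref{prop:ck-complex} outputs the four-term sequence \eqref{eq:ch-ck-sequence} at once. For \(k\ge3\) the restriction of \(\cK^{\alpha_1,\alpha_2}\) to \(\IGr(k,V)\) has many more graded pieces (already \(\cK^{\alpha_1,0}\cong\wedge^{\alpha_1}\cU_k^\perp\) has \(k+1\) pieces for \eqref{eq:def-csk}), the rows of your double complex are not truncated Koszul complexes of \eqref{eq:def-csk}, the claim that the \(E_1\)-page is \(\wedge^\bullet\cS_k\) with differential \(\omega_\cS\) is the conclusion restated, and the ``weight estimate'' supposed to kill higher differentials is never formulated. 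These are genuine gaps, not routine verifications.

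The missing idea is the paper's reduction to \(k=2\): for \(k\ge3\) the complex \eqref{eq:ckk-special} is \emph{defined} as \(\Phi_k(\cK^\bullet_{t+k-2})\), where \(\Phi_k=(\pi_k)_*\circ\pi_2^*\) is the Fourier--Mukai functor of Section~\ref{ss:fm}. Base change (Lemma~\ref{lem:phi-tphi}) and Borel--Bott--Weil (Lemma~\ref{lem:tphi-cu}) show that \(\tilde\Phi_k\) takes staircase complexes to staircase complexes (Proposition~\ref{prop:thpi-staircase}), whence \(\Phi_k(\cK_{\IGr(2,V)}^{\alpha_1,\alpha_2})\) is \(\cK_{\IGr(k,V)}^{\alpha_1,\alpha_2}\) or \(0\) (Corollary~\ref{cor:phi-ck}); this manufactures the differentials for \(k\ge3\) with no coefficient computation at all, and Lemma~\ref{lem:phi-cs}, computing \(\Phi_k(\wedge^t_\Sp\cS_2(-1))\), converts the \(k=2\) cohomology computation into \eqref{eq:ch-cckk} --- in particular, the vanishing for \(2n-2k+3\le t\le 2n-k\) comes for free from \(\Phi_k(\wedge^s_\Sp\cS_2(-1))=0\) for \(s\le k-3\). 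Two smaller remarks: your degree bookkeeping needs care (with the leftmost term in degree \(0\), \(\cH^0(\cK_t^\bullet)\) is a kernel; the cokernel description of \(\wedge^t_\Sp\) belongs to the dual complex \(\cE_t^\bullet\) of Theorem~\ref{thm:bicomplex}); and your reading of the acyclicity gap as \(t=n-k+1\) is indeed the one forced by \eqref{eq:ch-cckk}, of which the stated value \(t=n-1\) is the \(k=2\) specialization.
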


In our future work we are planning to use these results, to study fullness 
of the exceptional collection on~\(\IGr(k,2n)\) constructed in~\cite{KuzPol} for any~\(n\) and~\(2 \le k \le n\), and minimal Lefschetz exceptional collections on~\(\IGr(3, 2n)\) for any~\(n\).

To prove Theorem~\ref{thm:main} we first consider in detail the case~\(k = 2\).
In this case the bundles~\(\cK_{\Gr(2,V)}^{\alpha_1,\alpha_2}\), 
up to duality and twist, coincide with the bundles~\(\cE^{a,b}\)
that can be defined as truncations of the Koszul complex, see~\eqref{eq:ck-x-my},
or as simple extensions~\eqref{eq:ck-filtration}.
These bundles first appeared in~\cite[Conjecture~9.8]{KuzPol} and were extensively studied in~\cite{F15}.
The crucial computation with these objects is carried out in Proposition~\ref{prop:ck-complex},
where we assemble the restrictions to~\(\IGr(2,V)\) of the bundles~\(\cE^{a,b}\) into a complex,
and relate this complex to two Koszul complexes of~\eqref{eq:def-csk}.
This allows us to compute the cohomology of these complexes, see Theorem~\ref{thm:bicomplex},
and after dualization and twist, deduce Theorem~\ref{thm:main} in the case~\(k = 2\). 

To deduce Theorem~\ref{thm:main} for arbitrary~\(k\), 
we use the natural Fourier--Mukai functors
\begin{equation*}
\tilde\Phi_k \colon \Db(\Gr(2,V)) \to \Db(\Gr(k,V)), 
\qquad\text{and}\qquad
\Phi_k \colon \Db(\IGr(2,V)) \to \Db(\IGr(k,V)), 
\end{equation*}
see Section~\ref{ss:fm}.
In Lemmas~\ref{lem:tphi-cu} and~\ref{lem:phi-cs} we compute their actions
on appropriate Schur functors applied to the dual tautological bundle~\(\cU_2^\vee\) of~\(\IGr(2,V)\),
and on the symplectic wedge powers of the symplectic bundle~\(\cS_2\) of~\(\IGr(2,V)\).
We use the first to show that in some cases~\(\tilde\Phi_k\) 
takes a staircase complex on~\(\Gr(2,V)\) to a staircase complex on~\(\Gr(k,V)\),
see Proposition~\ref{prop:thpi-staircase},
and then deduce from this that 
it takes the objects~\(\cK_{\Gr(2,V)}^{\alpha_1,\alpha_2}\) to~\(\cK_{\Gr(k,V)}^{\alpha_1,\alpha_2}\),
see Corollary~\ref{cor:phi-ck}.
After that, Theorem~\ref{thm:main} easily follows.

\subsection*{Notation and conventions}

We work over an algebraically closed field~\(\Bbbk\) of characteristic zero.

\subsection*{Acknowledgments} I would like to thank my advisor, Alexander Kuznetsov, without whom this work would not have been possible, Lyalya Guseva for attention to my work and the anonymous referee for useful suggestions.

\section{Grassmannians of isotropic lines}
\label{sec:case_k=2}

Throughout this section we use the notation~\(\cU \coloneqq \cU_2\) 
for the tautological bundle on~\(\Gr(2,V)\) of rank~2 as well as for its restriction to~\(\IGr(2,V)\)
and~\(\cS \coloneqq \cS_2\) for the symplectic vector bundle (defined by~\eqref{eq:def-csk}) of rank~\(2n - 4\) on~\(\IGr(2,V)\).

The main result of the section is a proof of Theorem~\ref{thm:main} for~\(\IGr(2,V)\).

\subsection{Vector bundles~\texorpdfstring{\(\cE^{a,b}\)}{E{a,b}}}

Define vector bundles~\(\cE^{a,b}\) on~\(\Gr(2,V)\) for any
\begin{equation}
	\label{eq:ab-condition}
	0 \le a,b; \qquad a + b \le 2n - 2
\end{equation}
as a special case of the more general class of bundles~\(\cE^{\lambda,\mu}\) from~\cite{F15}:
\begin{equation}
	\cE^{a,b} \coloneqq \cE^{(b),(1^{2n-2-a})}(1), \qquad \text{where~\((1^{2n-2-a}) = (\underbrace{1, \dots, 1}_{2n-2-a})\).}
\end{equation}
In particular, the definition~\eqref{eq:K=E*} of~\(\cK_{\Gr(2,V)}\) implies
\begin{equation}
	\label{eq:E=K^*}
	\cE^{a,b} \cong \left(\cK_{\Gr(2,V)}^{2n-2-a,b}\right)^\vee(1).
\end{equation}
For our purposes it is, again, more convenient to interpret these bundles in terms of staircase complexes, 
rather than as pushforwards from a partial flag variety. Note that the combination of~\eqref{eq:E=K^*} with~\eqref{eq:def-ckk} already provides the left resolution:
\begin{multline}
	\label{eq:E-left_res}
	0 \to
	\mathrm{S}^{c-b}\cU(-1-b) \to 
	V \otimes \mathrm{S}^{c-b-1}\cU(-1-b) \to 
	\dots \to
	\wedge^{c-b}V \otimes \cO(-1-b)
	\\ \to 
	\wedge^{c-b+2}V^\vee \otimes \cO(-b) \to 
	\wedge^{c-b+3}V^\vee \otimes \cU(-b+1) \to 
	\dots \to
	\wedge^{c+1}V^\vee \otimes \mathrm{S}^{-b+1}\cU(-1) \\
	\to \cE^{a,b} \to
	0,
\end{multline}
where~\(c=2n-2-a\).

Next, we recall the definition of staircase complexes in the case of~\(\IGr(2, V)\).
For any pair of integers~\(\alpha_1 \ge \alpha_2\) we consider the \(\GL(V)\)-equivariant vector bundle
\begin{equation}
\label{eq:sigma-s}
\Sigma^{\alpha_1,\alpha_2}\cU^\vee \cong \Sy^{\alpha_1-\alpha_2}\cU^\vee \otimes \cO(\alpha_2) \cong \Sy^{\alpha_1-\alpha_2}\cU \otimes \cO(\alpha_1).
\end{equation}
By~\cite{Fonarev_2013} for any weight~\((\alpha_1,\alpha_2)\) of~\(\GL_2\) such that
\begin{equation}
\label{eq:weight-gl2}
\alpha_1 \ge \alpha_2 \ge \alpha_1 - 2n + 2
\end{equation}
there is an exact sequence of vector bundles, called the {\sf staircase complex}, that looks like
\begin{multline}
\label{eq:staircase-2lines}
0 \to
\wedge^{2n}V^\vee \otimes \Sigma^{\alpha_2-1,\alpha_1-2n+1}\cU^\vee \to 
\dots \to
\wedge^{\alpha_1-\alpha_2+2}V^\vee \otimes \Sigma^{\alpha_2-1,\alpha_2-1}\cU^\vee 
\\ \to 
\wedge^{\alpha_1-\alpha_2}V^\vee \otimes \Sigma^{\alpha_2,\alpha_2}\cU^\vee \to 
\dots \to
V^\vee \otimes \Sigma^{\alpha_1-1,\alpha_2}\cU^\vee \to 
\Sigma^{\alpha_1,\alpha_2}\cU^\vee \to 
0.
\end{multline}
Here in the top row the second component of the weight increases from~\(\alpha_1 - 2n + 1\) to~\(\alpha_2-1\), 
while the exponent of the wedge power decreases from~\(2n\) to~\(\alpha_1 - \alpha_2 + 2\),
and in the bottom row the first component of the weight increases from~\(\alpha_2\) to~\(\alpha_1\), 
while the exponent of the wedge power decreases from~\(\alpha_1 - \alpha_2\) to~\(0\).

\begin{remark}
\label{rem:staircase-unique}
The staircase complex is \(\GL(V)\)-equivariant,
and its differentials are the \emph{unique non-zero \(\GL(V)\)-equivariant maps} between the respective terms.
In particular, the dual of the staircase complex~\eqref{eq:staircase-2lines} with parameters~\((\alpha_1,\alpha_2)\)
is the staircase complex with parameters~\((2n - 1 - \alpha_1, 1 - \alpha_2)\).
It also follows that the bottom line of~\eqref{eq:staircase-2lines} is, up to twist, the Koszul complex
\begin{equation}
\label{eq:koszul-right}
\Big\{ 
\wedge^mV^\vee \otimes \cO \to 
\wedge^{m-1}V^\vee \otimes \cU^\vee \to 
\dots \to 
V^\vee \otimes \Sy^{m-1}\cU^\vee \to 
\Sy^m\cU^\vee
\Big\},
\end{equation}
where~\(m = \alpha_1 - \alpha_2\),
and the top line of~\eqref{eq:staircase-2lines} is, up to twist, the Koszul complex
\begin{equation*}
\Big\{ 
\Sy^m\cU \to V \otimes \Sy^{m-1}\cU \to 
\dots \to 
\wedge^{m-1}V \otimes \cU \to 
\wedge^mV \otimes \cO
\Big\},
\end{equation*}
where~\(m = 2n - 2 - \alpha_1 + \alpha_2\).
\end{remark}

We consider the special case of staircase complexes where~\(\alpha_1 \ge 0 \ge \alpha_2\), 
and for convenience we write~\((\alpha_1,\alpha_2) = (a,-b)\).
Then the condition~\eqref{eq:weight-gl2} translates into the inequalities~\eqref{eq:ab-condition}.
Using the isomorphism~\(\wedge^iV \cong \wedge^{2n-i}V^\vee\) 
and the uniqueness of staircase complexes explained in Remark~\ref{rem:staircase-unique},
note that the resolution~\eqref{eq:E-left_res} is a truncation of the exact sequence~\eqref{eq:staircase-2lines}. 
So, we finally obtain our right resolution of the bundle~\(\cE^{a,b}\):
\begin{equation}
\label{eq:ck-x-my}
0 \to \cE^{a,b} \to 
\wedge^{a}V^\vee \otimes \Sigma^{0,-b}\cU^\vee \to 
\dots \to
V^\vee \otimes \Sigma^{a-1,-b}\cU^\vee \to 
\Sigma^{a,-b}\cU^\vee \to 
0.
\end{equation} 
It is the truncation of the staircase complex~\eqref{eq:staircase-2lines} with~\((\alpha_1,\alpha_2) = (a,-b)\)
at the term with weight~\((0,-b)\). 
Alternatively, \(\cE^{a,b}\) is a truncation of the Koszul complex~\eqref{eq:koszul-right} with~\(m = a + b\) twisted by~\(\cO(-b)\).
Note that~\(\Sigma^{0,-b}\cU^\vee \cong \Sy^b\cU\), so~\(\cE^{a,b}\) 
is a \(\GL(V)\)-equivariant subbundle in~\(\wedge^{a} V^\vee \otimes \Sy^b\cU\). In particular,~\(\cE^{0,b} \cong \Sy^b\cU\).

Next, if~\(a \ge 1\) by~\cite[Theorem~4.4]{F15} there is a~\(\GL(V)\)-equivariant exact sequence:
\begin{equation}
	\label{eq:ce-ce-es}
	0 \to \cE^{a,b} \to \wedge^{a} V^\vee \otimes \Sy^b\cU \to \cE^{a-1,b+1}(1) \to 0.
\end{equation}

Further, the tautological sequence~\eqref{eq:def-uperp}
induces a filtration on the bundle \(\wedge^{a} V^\vee \otimes \Sy^b\cU\) with factors:
\begin{equation}\label{eq:V-filtration}
	\wedge^{a} \cU^\perp \otimes \Sy^b\cU,
	\quad
	\wedge^{a-1} \cU^\perp \otimes \Sy^{b-1}\cU, 
	\quad
	\wedge^{a-1} \cU^\perp \otimes \Sy^{b+1}\cU(1),
	\quad
	\wedge^{a-2} \cU^\perp \otimes \Sy^b\cU(1).
\end{equation}
Where, by convention, all wedge and symmetric powers~\(\wedge^i(-)\) and~\(\Sy^i(-)\) with negative~\(i\) are zero. 

By induction on~\(a \ge 0\) one gets the following exact sequence for~\(\cE^{a,b}\), since we already know it for~\(a = 0\):
\begin{equation}
	\label{eq:ck-filtration}
	0 \to \wedge^{a} \cU^\perp \otimes \Sy^b\cU \to \cE^{a,b} \to \wedge^{a-1} \cU^\perp \otimes \Sy^{b-1}\cU \to 0.
\end{equation} 

\begin{remark}
	By~\cite[Proposition~3.5]{F15} the bundles~\(\cE^{a,b}\) are exceptional.
	In particular, the extension~\eqref{eq:ck-filtration} is non-split.
\end{remark}

\subsection{Secondary staircase complexes on~\texorpdfstring{\(\IGr(2,V)\)}{IGr(2,V)}}

Now, we consider the restrictions of the bundles~\(\cE^{a,b}\) to~\(\IGr(2,V)\).
Abusing notation, we will still write~\(\cE^{a,b}\) for these restrictions.
In this subsection we construct \(\Sp(V)\)-equivariant complexes on~\(\IGr(2,V)\) from these bundles. 
Recall that~\(\omega\) denotes the symplectic form on~\(V\).

First, we consider two \(\Sp(V)\)-equivariant maps between the ambient bundles of~\(\cE^{a,b}\) and~\(\cE^{a+1,b-1}\):
\begin{equation*}
\vn_1,\vn_2 \colon \wedge^a V^\vee \otimes \Sy^b\cU \to \wedge^{a+1}V^\vee \otimes \Sy^{b-1}\cU.
\end{equation*}
Denote the composition of~\(V^\vee \twoheadrightarrow \cU^\vee\) and the trace map~\(\cU^\vee \otimes \cU \to \cO\) by
\begin{equation*}
	\operatorname{tr} \colon V^\vee \otimes \cU \to \cO.
\end{equation*}
It induces the map 
\begin{equation*}
	\operatorname{tr} \colon \wedge^a V^\vee \otimes \Sy^b\cU \to \wedge^{a-1}V^\vee \otimes \Sy^{b-1}\cU.
\end{equation*}
The first morphism is defined as the composition of~\(\operatorname{tr}\) and~\(- \wedge \omega 
\):
\begin{equation*}
	\vn_1 \colon \wedge^a V^\vee \otimes \Sy^b\cU \xrightarrow{\operatorname{tr}} \wedge^{a-1}V^\vee \otimes \Sy^{b-1}\cU \xrightarrow{- \wedge \omega} \wedge^{a+1}V^\vee \otimes \Sy^{b-1}\cU.
\end{equation*}
The second one is also induced by the composition of multiplication by~\(\omega\) and~\(\operatorname{tr}\), but in the opposite order:
\begin{equation*}
	\vn_2 \colon \wedge^a V^\vee \otimes \Sy^b \cU \xrightarrow{- \otimes \omega \otimes -} \wedge^a V^\vee \otimes \left(\wedge^2 V^\vee \otimes \Sy^{b}\cU\right)
	\xrightarrow{- \otimes \operatorname{tr}}  \wedge^{a} V^\vee \otimes V^\vee \otimes \Sy^{b-1}\cU  \to \wedge^{a+1}V^\vee \otimes \Sy^{b-1}\cU,
\end{equation*}
where the last arrow is just a canonical surjection on the direct summand.

\begin{remark}
	The abstract descriptions of~\(\vn_1\) and~\(\vn_2\) are not easy to work with. Specifically, it is difficult to compute their components between the subquotients of the filtrations~\eqref{eq:V-filtration} and~\eqref{eq:ck-filtration}, which is our main goal. 
	
	On the other hand, the direct computation in the case~\(k=2\) is straightforward, unambiguous and turns out to be quite simple. 
\end{remark}

Fix a point~\([U] \in \IGr(2,V)\) and choose a basis~\(e_1,e_2\) of~\(U\). 
Then~\(\operatorname{tr}\) acts at this point by a convolution with~\(\sum e_i \otimes \partial/\partial e_i\).
So, the maps are given by the formulas:
\begin{align}
\label{eq:def-v1}
\vn_1(\lambda \otimes P) & \coloneqq 
\lambda_1 \wedge \omega \otimes P_1 + 
\lambda_2 \wedge \omega \otimes P_2,
\\
\label{eq:def-v2}
\vn_2(\lambda \otimes P) & \coloneqq 
\lambda \wedge (\omega_1 \otimes P_1 + \omega_2 \otimes P_2).
\end{align}
where~\(\lambda \in \wedge^aV^\vee\) and~\(P \in \Sy^b U\) is considered as a homogeneous polynomial of degree~\(b\) on~\(U^\vee\).
Moreover, we write~\(\lambda_i\) and~\(\omega_i\) for the evaluation of the corresponding skew-form on~\(e_i\)
and~\(P_i\) for the derivative of~\(P\) in the direction~\(e_i\).

The maps~\(\vn_1\) and~\(\vn_2\) are obviously independent of the choice of basis.

\begin{proposition}
\label{prop:ck-complex}
For any~\((a,b)\) such that~\eqref{eq:ab-condition} holds,
the map
\begin{equation*}
\vn \coloneqq \tfrac1{b + 1}\vn_1 + \vn_2 \colon \wedge^aV^\vee \otimes \Sy^b\cU \to \wedge^{a+1}V^\vee \otimes \Sy^{b-1}\cU
\end{equation*}
takes the subbundle~\(\cE^{a,b} \subset \wedge^aV^\vee \otimes \Sy^b\cU\) to~\(\cE^{a+1,b-1} \subset \wedge^{a+1}V^\vee \otimes \Sy^{b-1}\cU\).
Moreover, the chain 
\begin{equation}
\label{eq:ck-complex}
\cE^{a,b} \xrightarrow{\ \vn\ }
\cE^{a+1,b-1} \xrightarrow{\ \vn\ }
\dots \xrightarrow{\ \vn\ }
\cE^{a+b-1,1} \xrightarrow{\ \vn\ }
\cE^{a+b,0}
\end{equation}
is a complex of vector bundles on~\(\IGr(2,V)\).
Finally, the maps~\(\vn\) preserve the filtrations~\eqref{eq:ck-filtration}
and give rise to 
an exact sequence of complexes
\[\begin{tikzcd}[column sep=small]
	0 \ar[d] & 0 \ar[d] && 0 \ar[d] & 0 \ar[d] &
	\\
	\wedge^a \cU^\perp \otimes \Sy^b \cU \ar[r, "{\vn_2}"] \ar[d] &
	\wedge^{a+1} \cU^\perp \otimes \Sy^{b-1} \cU \ar[r,"{\vn_2}"] \ar[d] &
	\dots \ar[r,"{\vn_2}"] &
	\wedge^{a+b-1} \cU^\perp \otimes \cU \ar[r, "{\vn_2}"] \ar[d] &
	\wedge^{a+b} \cU^\perp \ar[d] 
	\\
	\cE^{a,b} \ar[r, "\vn"] \ar[d] &
	\cE^{a+1,b-1} \ar[r,"\vn"] \ar[d] &
	\dots \ar[r,"\vn"] &
	\cE^{a+b-1,1} \ar[r,"\vn"] \ar[d] &
	\cE^{a+b,0} \ar[d] 
	\\
	\wedge^{a-1} \cU^\perp \otimes \Sy^{b-1} \cU \ar[r,"{-\vn_2}"] \ar[d] &
	\wedge^{a} \cU^\perp \otimes \Sy^{b-2} \cU \ar[r, "{-\vn_2}"] \ar[d] &
	\dots \ar[r,"{-\vn_2}"] &
	\wedge^{a+b-2} \cU^\perp \ar[r] \ar[d] &
	0
	\\
	0 & 0 && 0
\end{tikzcd}\]
whose top and bottom rows are the truncated Koszul complexes of the exact sequence~\eqref{eq:def-csk}.
\end{proposition}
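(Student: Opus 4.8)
The plan is to prove the three claims of the proposition---that $\vn$ sends $\cE^{a,b}$ into $\cE^{a+1,b-1}$, that $\vn^2 = 0$, and that $\vn$ respects the filtration \eqref{eq:ck-filtration} with the indicated associated graded---by reducing everything to the explicit pointwise formulas \eqref{eq:def-v1} and \eqref{eq:def-v2}. Since the bundles $\cE^{a,b}$ are defined (via \eqref{eq:ck-x-my}) as the kernels of the staircase differential, the most concrete route to the first claim is to fix a point $[U]\in\IGr(2,V)$, a basis $e_1,e_2$ of $U$, and to describe $\cE^{a,b}\subset \wedge^a V^\vee \otimes \Sy^b\cU$ as an explicit intersection of kernels of contraction maps; one then checks by direct computation with the formulas for $\vn_1,\vn_2$ that $\vn(\cE^{a,b})$ lands in the corresponding kernel defining $\cE^{a+1,b-1}$. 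The isotropy condition $U \subset U^\perp$, i.e. $\omega|_U = 0$, is exactly what makes these contractions behave well, so this is where the restriction to $\IGr(2,V)$ (rather than $\Gr(2,V)$) enters.

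First I would fix bases and turn $\vn_1,\vn_2$ into completely explicit operators on $\wedge^a V^\vee \otimes \Sy^b U$, recording how each acts on a decomposable element $\lambda \otimes P$. The key algebraic inputs are that $\omega|_U = 0$ (so $\omega_1,\omega_2$, the contractions of $\omega$ with $e_1,e_2$, annihilate $U$ and in particular annihilate $e_1,e_2$ under the trace) and the elementary commutation relations between the wedge-with-$\omega$ operation and the trace/contraction operation. The choice of coefficient $\tfrac1{b+1}$ in $\vn = \tfrac1{b+1}\vn_1 + \vn_2$ is surely dictated by making these two contributions combine into an operator that preserves $\cE^{a,b}$; I would verify this by computing $\vn$ on the defining kernel relations and checking the normalizing constant makes the unwanted terms cancel. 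For the complex property $\vn\circ\vn = 0$, I would again work pointwise: expand $\vn^2 = \tfrac1{b(b+1)}\vn_1^2 + \tfrac1{b+1}(\vn_1\vn_2 + \vn_2\vn_1) + \vn_2^2$ on $\lambda\otimes P$, and show that $\vn_1^2$, $\vn_2^2$, and the mixed term each vanish or cancel, using $\omega\wedge\omega$ being symmetric against the skew wedge, $\operatorname{tr}^2$ relations, and the isotropy of $U$.

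For the final statement about the exact sequence of complexes, the filtration \eqref{eq:ck-filtration} realizes $\cE^{a,b}$ as an extension of $\wedge^{a-1}\cU^\perp \otimes \Sy^{b-1}\cU$ by $\wedge^a \cU^\perp \otimes \Sy^b\cU$, both of which are built from the subbundle $\cU^\perp \subset V^\vee\otimes\cO$ of \eqref{eq:def-uperp}. I would check that the sub\-bundle $\wedge^a\cU^\perp\otimes\Sy^b\cU$ is preserved by $\vn$, identify the induced map on it, and identify the induced map on the quotient $\wedge^{a-1}\cU^\perp\otimes\Sy^{b-1}\cU$. The claim is that on the sub the induced differential is $\vn_2$ and on the quotient it is $-\vn_2$; the natural explanation is that $\vn_1$ factors through the trace $V^\vee \to \cU^\vee$ and so its contribution moves strictly down the $\cU^\perp$-filtration (landing in lower filtration degree), whence $\vn$ acts on each associated graded piece through the $\vn_2$ part only, up to the sign bookkeeping coming from the connecting maps. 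Finally I would recognize the two outer rows: with $\Sy^\bullet$ fixed and the wedge degree in $\cU^\perp$ increasing, the differential $\vn_2$---which is wedging with $\omega$ after the trace---is precisely the Koszul differential for the embedding $\cU \xrightarrow{\omega} \cU^\perp$ of \eqref{eq:def-csk}, tensored with the appropriate symmetric power, so the rows are the asserted truncated Koszul complexes of \eqref{eq:def-csk}.

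The main obstacle I expect is pinning down the exact map induced by $\vn$ on the associated graded of \eqref{eq:ck-filtration}, that is, proving that the $\vn_1$-contribution really does drop filtration degree and that the surviving $\vn_2$-contributions carry the stated signs $(+\vn_2$ on top, $-\vn_2$ on the bottom$)$. This is a bookkeeping-heavy computation: one must track how the splitting $V^\vee = \cU^\perp \oplus (\text{complement})$ interacts with both the trace in $\vn_1$ and the $\omega$-wedge in $\vn_2$, and verify that the extension class of \eqref{eq:ck-filtration} is compatible with these induced maps so that the diagram genuinely commutes with rows and columns exact. The remark in the statement that the components of $\vn_1,\vn_2$ between the subquotients of \eqref{eq:V-filtration} and \eqref{eq:ck-filtration} are ``difficult to compute'' abstractly, together with the author's stated preference for the explicit $k=2$ computation, confirms that this is the delicate point; I would handle it by doing the calculation entirely in the chosen basis, where the isotropy $\omega|_U = 0$ makes the cross terms manageable, and only afterward repackage the result invariantly.
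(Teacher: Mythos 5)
Your overall framework (reduction to a single point by \(\Sp(V)\)-equivariance, explicit computation with \eqref{eq:def-v1}--\eqref{eq:def-v2} in a chosen basis) is the paper's, and your route to the first claim --- describing \(\cE^{a,b}\) via \eqref{eq:ck-x-my} as the kernel of the contraction \(\wedge^aV^\vee\otimes\Sy^b\cU\to\wedge^{a-1}V^\vee\otimes\Sy^{b+1}\cU(1)\) and checking compatibility of \(\vn\) with it --- is viable; it is essentially Proposition~\ref{prop:bicomplex-appendix}, with the caveat that \(\vn\) only \emph{anti}commutes with that contraction after \(b\)-dependent rescaling of both maps, so even this step forces exact bookkeeping of coefficients.

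The genuine gap is in the third claim, precisely at the point you flag as delicate. Your proposed mechanism --- that \(\vn_1\) ``moves strictly down the \(\cU^\perp\)-filtration, whence \(\vn\) acts on each associated graded piece through the \(\vn_2\) part only'' --- is false, because \(\omega\) is \emph{not} a section of \(\wedge^2\cU^\perp\): at a point \([U]\) one only has \(\omega\in U^\perp\wedge V^\vee\), and the discrepancy is exactly the form \(\bar\omega=\omega+e^1\wedge\omega_1+e^2\wedge\omega_2\in\wedge^2U^\perp\) of \eqref{eq:def-bar-omega}. Concretely, on a lift \(\xi=(\mu\wedge e^1)\otimes e_1Q+(\mu\wedge e^2)\otimes e_2Q\) of an element of the quotient \(\wedge^{a-1}\cU^\perp\otimes\Sy^{b-1}\cU\) one gets \(\vn_1(\xi)=(b+1)\,\mu\wedge\omega\otimes Q\), whose filtration-level-one component is nonzero and induces \(\tfrac{b+1}{b}\vn_2\) on the graded quotient; symmetrically, \(\vn_2\) alone induces \(-\tfrac{b+1}{b}\vn_2\) there, not \(-\vn_2\). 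Only the specific combination \(\tfrac1{b+1}\vn_1+\vn_2\) produces exactly \(-\vn_2\), and in fact neither \(\vn_1\) nor \(\vn_2\) separately maps \(\cE^{a,b}\) into \(\cE^{a+1,b-1}\): each image has a component in the third factor \(\wedge^{a}\cU^\perp\otimes\Sy^{b}\cU(1)\) of the filtration \eqref{eq:V-filtration} of the target, and these components cancel only in the stated combination. If your mechanism were right, the conclusion would be insensitive to the coefficient of \(\vn_1\); that this is impossible is exactly the content of Remark~\ref{rem:bicomplex2}, i.e.\ the error in \cite{lines} that this proposition corrects. The same insensitivity to coefficients corrupts your check of \(\vn\circ\vn=0\): consecutive differentials carry coefficients \(\tfrac1{b+1}\) and \(\tfrac1{b}\), so the expansion is \(\tfrac1{b(b+1)}\vn_1^2+\tfrac1b\,\vn_1\vn_2+\tfrac1{b+1}\vn_2\vn_1+\vn_2^2\) rather than your symmetric one, and here \(\vn_1^2\ne0\) cancels only against the asymmetrically weighted cross terms (while \(\vn_2^2=0\)). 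The paper's proof settles all three claims at once by computing \(\vn\) on the explicit lifts: relative to \eqref{eq:ck-filtration} it is upper triangular with matrix \(\left(\begin{smallmatrix}\vn_2&\bar\omega\\0&-\vn_2\end{smallmatrix}\right)\), where the off-diagonal entry is the wedge product with \(\bar\omega\) (an entry your picture omits entirely), and the complex property together with the exact sequence of complexes follows because \(\bar\omega\wedge(-)\) commutes with the Koszul differential \(\vn_2\).
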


\begin{proof}
We verify all the clams at one point~\([U] \in \IGr(2,V)\).
Since all vector bundles and morphisms involved are~\(\Sp(V)\)-equivariant, this is enough to prove the proposition.

We start by computing the map~\(\vn\) on the first factor~\(\wedge^a U^\perp \otimes \Sy^b U\) of the filtration~\eqref{eq:ck-filtration}.
Then~\mbox{\(\lambda \in \wedge^aU^\perp\)}, hence~\(\lambda_1 = \lambda_2 = 0\), and therefore
the map~\(\vn_1\) vanishes on this factor.
On the other hand, since~\(\omega_1,\omega_2 \in U^\perp\) (because~\(U\) is isotropic), 
we have~\(\lambda \wedge \omega_1, \lambda \wedge \omega_2 \in \wedge^{a+1}U^\perp\), 
hence~\(\vn_2\) preserves the first factor. 
It is also clear, that~\(\vn_2\) coincides on the first factor with the differential of the Koszul complex
\begin{equation}
\label{eq:koszul-cs}
0 \to
\Sy^t \cU \xrightarrow{\ \vn_2\ }
\cU^\perp \otimes \Sy^{t-1} \cU \xrightarrow{\ \vn_2\ }
\dots \xrightarrow{\ \vn_2\ }
\wedge^{t -1} \cU^\perp \otimes \cU \xrightarrow{\ \vn_2\ }
\wedge^{t} \cU^\perp \to
\wedge^t\cS \to 
0,
\end{equation}
of the short exact sequence~\eqref{eq:def-csk}.

Now, consider the second factor of the filtration~\eqref{eq:ck-filtration}.
To describe it explicitly, we consider the dual basis of~\(e_1,e_2 \in U\)
and lift its elements to linear functions~\(e^1,e^2 \in V^\vee\).
This allows us to lift an element~\(\mu \otimes Q\) 
from the second factor~\(\wedge^{a-1}U^\perp \otimes \Sy^{b-1}U\) of~\eqref{eq:ck-filtration} 
to~\(\wedge^aV^\vee \otimes \Sy^bU\) as
\begin{equation*}
\xi \coloneqq (\mu \wedge e^1) \otimes (e_1 \cdot Q) + (\mu \wedge e^2) \otimes (e_2 \cdot Q),
\end{equation*}
where~\(\mu \in \wedge^{a-1}U^\perp\) and~\(Q \in \Sy^{b - 1}U\) is a polynomial of degree~\(b - 1\) in~\(e_1\) and~\(e_2\).
Then
\begin{equation*}
\vn_1(\xi) = 
\mu \wedge (\omega \otimes 2Q + \omega \otimes e_1 \cdot Q_1 + \omega \otimes e_2 \cdot Q_2) =
(b + 1)\mu \wedge \omega \otimes Q,
\end{equation*}
where~\(Q_1\) and~\(Q_2\) are the derivatives of~\(Q\) with respect to~\(e_1\) and~\(e_2\).
Similarly,
\begin{align*}
\vn_2(\xi) 
&= \mu \wedge (e^1 \wedge \omega_1 \otimes (Q + e_1 \cdot Q_1) + e^1 \wedge \omega_2 \otimes e_1 \cdot Q_2) \\ 
&+ \mu \wedge (e^2 \wedge \omega_1 \otimes e_2 \cdot Q_1 + e^2 \wedge \omega_2 \otimes (Q + e_2 \cdot Q_2).
\end{align*}
Now consider the skew-form
\begin{equation}
\label{eq:def-bar-omega}
\bar\omega \coloneqq \omega + e^1 \wedge \omega_1 + e^2 \wedge \omega_2 \in \wedge^2V^\vee.
\end{equation} 
Of course, it depends on the choice of lifts~\(e^i\), 
but for any choice of the lifts the form~\(\bar\omega\) is contained in~\mbox{\(\wedge^2U^\perp \subset \wedge^2V^\vee\)}.
Using this notation and the above computations, we can write
\begin{equation*}
\vn(\xi) = \mu \wedge \bar\omega \otimes Q 
- \mu \wedge \Big(\omega_1 \wedge (e^1 \otimes e_1 + e^2 \otimes e_2) \cdot Q_1 
+ \omega_2 \wedge (e^1 \otimes e_1 + e^2 \otimes e_2) \cdot Q_2\Big),
\end{equation*}
and since~\(\bar\omega \in \wedge^2U^\perp\), 
we see that the first summand is contained in the first factor of the filtration~\eqref{eq:ck-filtration},
while the second summand is in its second factor.
It follows that 
\begin{equation*}
\vn(\cE^{a,b}) \subset \cE^{a+1,-b+1},
\end{equation*}
and thus the first statement of the proposition is proved.

Moreover, it follows that~\(\vn\)
acts on~\(\cE^{a,b}\vert_{[U]} = (\wedge^a U^\perp \otimes \Sy^b U) \oplus (\wedge^{a-1} U^\perp \otimes \Sy^{b-1} U)\) 
(where the direct sum decomposition is induced by the lifts~\(e^i\) chosen above)
by the matrix
\begin{equation*}
\begin{pmatrix}
\vn_2 &
\bar\omega \\
0 &
-\vn_2.
\end{pmatrix}
\end{equation*}
Thus, to deduce the second and third statements, 
it remains to note that the wedge product with~\(\bar\omega\) commutes with the Koszul differential~\(\vn_2\),
which is obvious from~\eqref{eq:def-v2}.
\end{proof}

\begin{remark}
	\label{rem:E^bullet_unique}
	Using~\cite[Corollary~3.10]{F15} it is easy to check that there are unique maps of vector bundles from~\(\cE^{a,b}\) to \(\cE^{a+1,b-1}\) and no higher morphisms. Analogously,~\(\cE^{a,b}\) is semiorthogonal to~\(\cE^{a+i,b-i}\) for \(i \ge 2\). This allows us to construct the complex~\eqref{eq:ck-complex} abstractly, as well as the complex~\eqref{eq:koszul-cs}. However, it is not clear whether they form the short exact sequence of complexes from the Proposition~\ref{prop:ck-complex} via~\eqref{eq:ck-filtration}. 
\end{remark}

\subsection{Cohomology of secondary staircase complexes on~\texorpdfstring{\(\IGr(2,V)\)}{IGr(2,V)}}

Now we consider the secondary staircase complex~\eqref{eq:ck-complex} for~\(a=0\). 
Recall that~\(\cS \coloneqq \cU^\perp/\cU\) is the symplectic vector bundle of rank~\(2n - 4\) on~\(\IGr(2,V)\), see~\eqref{eq:def-csk},
\(\omega_S \in H^0(\IGr(2,V), \wedge^2\cS)\) is its symplectic form,
and~\(\wedge_\Sp^i\cS\), \(0 \le i \le n - 2\), denote its symplectic wedge powers, see~\eqref{eq:def-wedge-sp}.

\begin{theorem}
\label{thm:bicomplex}
For any~\(0 \le t \le 2n - 2\) consider the complex
\begin{equation}
\label{eq:ck-special}
\cE_t^\bullet \coloneqq
\Big\{\cE^{0,t} \xrightarrow{\ \vn\ }
\cE^{1,t-1} \xrightarrow{\ \vn\ }
\dots \xrightarrow{\ \vn\ }
\cE^{t-1,-1} \xrightarrow{\ \vn\ }
\cE^{t,0}\Big\},
\end{equation}
where the rightmost term is in degree~\(0\).
Then its cohomology sheaves are the following:
\begin{equation}
\label{eq:ch-ck}
\cH^i(\cE_t^\bullet) \cong
\begin{cases}
\wedge^t_\Sp\cS, & \text{if~\(i = \hphantom{-}0\) and~\(0 \le t \le n - 2\),}\\
\wedge^{2n-2-t}_\Sp\cS, & \text{if~\(i = -1\) and~\(n \le t \le 2n - 2\),}\\
0, & \text{otherwise}.
\end{cases}
\end{equation}
\end{theorem}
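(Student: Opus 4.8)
The plan is to feed the short exact sequence of complexes produced by Proposition~\ref{prop:ck-complex}, specialized to~\(a=0\), into its long exact sequence of cohomology sheaves, and to reduce the whole computation to the identification of a single connecting homomorphism. Writing~\(T^\bullet\) for the top row and~\(B^\bullet\) for the bottom row of the bicomplex, the filtration~\eqref{eq:ck-filtration} gives a short exact sequence of complexes
\[
0 \to T^\bullet \to \cE_t^\bullet \to B^\bullet \to 0,
\]
in which~\(T^\bullet\) is the Koszul complex~\eqref{eq:koszul-cs} with parameter~\(t\) and~\(B^\bullet\) is the Koszul complex~\eqref{eq:koszul-cs} with parameter~\(t-2\), in each case with the rightmost term~\(\wedge^\bullet\cS\) deleted. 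Since~\eqref{eq:koszul-cs} is exact, each truncated complex has cohomology concentrated in a single degree: \(T^\bullet\) has cohomology~\(\wedge^t\cS\) in degree~\(0\), and~\(B^\bullet\) has cohomology~\(\wedge^{t-2}\cS\) in degree~\(-1\) (matching the placement of~\(\cE^{t,0}\) in degree~\(0\) in~\eqref{eq:ck-special}).

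First I would write down the long exact sequence. Because~\(T^\bullet\) and~\(B^\bullet\) have cohomology only in degrees~\(0\) and~\(-1\) respectively, it collapses at once to the four-term exact sequence
\[
0 \to \cH^{-1}(\cE_t^\bullet) \to \wedge^{t-2}\cS \xrightarrow{\ \delta\ } \wedge^t\cS \to \cH^0(\cE_t^\bullet) \to 0,
\]
together with~\(\cH^i(\cE_t^\bullet)=0\) for all~\(i \ne 0,-1\). Thus~\(\cH^0(\cE_t^\bullet)=\operatorname{coker}\delta\) and~\(\cH^{-1}(\cE_t^\bullet)=\ker\delta\), and everything hinges on the connecting map~\(\delta\).

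The crux is to show that~\(\delta\) is, up to a nonzero scalar, the Lefschetz operator~\(\wedge\omega_\cS \colon \wedge^{t-2}\cS \to \wedge^t\cS\). For this I would run the snake-lemma recipe using the pointwise splitting of~\eqref{eq:ck-filtration} by the lifts~\(e^i\) fixed in the proof of Proposition~\ref{prop:ck-complex}: in that splitting the differential of~\(\cE_t^\bullet\) has the upper-triangular matrix~\(\left(\begin{smallmatrix}\vn_2 & \bar\omega\\ 0 & -\vn_2\end{smallmatrix}\right)\), so the off-diagonal entry~\(\wedge\bar\omega\) computes~\(\delta\). A class in~\(\cH^{-1}(B^\bullet)=\wedge^{t-2}\cS\) lifts to an element of~\(\wedge^{t-2}\cU^\perp\); wedging with~\(\bar\omega\in\wedge^2\cU^\perp\) and projecting to~\(\wedge^t\cS=\operatorname{coker}(\wedge^{t-1}\cU^\perp\otimes\cU\to\wedge^t\cU^\perp)\), the form~\(\bar\omega\) reduces modulo~\(\cU\) to~\(\omega_\cS\) by~\eqref{eq:def-bar-omega} and~\eqref{eq:def-csk}, so~\(\delta\) is wedging with~\(\omega_\cS\). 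I expect the careful bookkeeping of this connecting map through both filtrations to be the main obstacle: one must check that the relevant off-diagonal term is the full form~\(\bar\omega\) (into which~\(\tfrac1{b+1}\vn_1\) contributes precisely the~\(\omega\)-summand), and that~\(\bar\omega\) descends to~\(\omega_\cS\) on the symplectic quotient~\(\cS\).

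Finally I would compute the kernel and cokernel of the Lefschetz operator on~\(\cS\), which has rank~\(2(n-2)\). By definition~\eqref{eq:def-wedge-sp}, \(\operatorname{coker}(\wedge\omega_\cS\colon\wedge^{t-2}\cS\to\wedge^t\cS)=\wedge^t_\Sp\cS\), and by the~\(\mathfrak{sl}_2\) (hard Lefschetz) structure on~\(\wedge^\bullet\cS\) this primitive quotient is nonzero precisely for~\(0\le t\le n-2\), giving the stated~\(\cH^0\). For the kernel I would use the same primitive decomposition together with the symplectic self-duality~\(\cS\cong\cS^\vee\): the operator is injective for~\(t\le n-1\), while for~\(t\ge n\) its kernel is the single Lefschetz string~\(L^{\,t-n}P^{2n-2-t}\), hence isomorphic to the primitive part~\(\wedge^{2n-2-t}_\Sp\cS\). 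Substituting these into the four-term sequence yields exactly~\eqref{eq:ch-ck}; the middle value~\(t=n-1\) falls in neither range because there~\(\wedge\omega_\cS\colon\wedge^{n-3}\cS\to\wedge^{n-1}\cS\) is an isomorphism, so both cohomology sheaves vanish.
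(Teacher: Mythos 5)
Your proposal is correct and follows essentially the same route as the paper's proof: both feed the short exact sequence of complexes from Proposition~\ref{prop:ck-complex} (with \(a=0\), \(b=t\)) into the snake lemma, identify the resulting connecting map \(\wedge^{t-2}\cS \to \wedge^t\cS\) with the Lefschetz operator \(\wedge\,\omega_\cS\) via the upper-triangular matrix form of \(\vn\) and the reduction of \(\bar\omega\) modulo \(\cU\), and then read off kernel and cokernel from injectivity (\(t \le n-2\)), surjectivity (\(t \ge n\)), or bijectivity (\(t = n-1\)) of that operator. Your explicit invocation of the \(\mathfrak{sl}_2\)/hard Lefschetz decomposition to justify these ranges and to identify the kernel with \(\wedge^{2n-2-t}_\Sp\cS\) only makes explicit what the paper compresses into ``by duality and~\eqref{eq:def-wedge-sp}.''
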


\begin{proof}
We consider the complex of Proposition~\ref{prop:ck-complex} for~\(a = 0\), \(b = t\).
Applying the snake lemma and taking~\eqref{eq:koszul-cs} into account, 
we obtain the exact sequence
\begin{equation}
\label{eq:ch-ck-sequence}
0 \to \cH^{-1}(\cE_t^\bullet) \xrightarrow{\ \quad} \wedge^{t-2}\cS \xrightarrow{\quad\ } \wedge^t\cS \xrightarrow{\ \quad} \cH^0(\cE_t^\bullet) \to 0
\end{equation} 
and~\(\cH^i(\cE_t^\bullet) = 0\) for~\(i \not\in \{-1,0\}\).
Furthermore, the argument of Proposition~\ref{prop:ck-complex} shows that 
the middle map~\(\wedge^{t-2}\cS \to \wedge^t\cS\) in~\eqref{eq:ch-ck-sequence} at the point~\([U] \in \IGr(2,V)\)
is induced by the map 
\begin{equation*}
\bar\omega \colon \wedge^{t-2}U^\perp \to \wedge^tU^\perp,
\qquad 
\lambda \mapsto \lambda \wedge \bar\omega,
\end{equation*}
where~\(\bar\omega\) is defined in~\eqref{eq:def-bar-omega},
and therefore it coincides with the map induced by the symplectic form~\(\omega_\cS\) of~\(\cS\).
In particular, when~\(0 \le t \le n - 2\), the middle map in~\eqref{eq:ch-ck-sequence} is injective,
so its kernel is zero, and by~\eqref{eq:def-wedge-sp} its cokernel is~\(\wedge^t_\Sp\cS\).
Similarly, when~\(n \le t \le 2n - 2\) the middle map in~\eqref{eq:ch-ck-sequence} is surjective,
so its cokernel is zero, and by duality and~\eqref{eq:def-wedge-sp} its kernel is~\(\wedge^{2n-2-t}_\Sp\cS\).
Finally, if~\(t = n - 1\) this map is an isomorphism.
\end{proof}

\begin{remark}
\label{rem:bicomplex}
The case where~\(t = n - 1\) is particularly interesting.
In this case the complex~\(\cE_{n-1}^\bullet\) is acyclic.
On the other hand, its terms~\(\cE^{n-1-b,b}\) have resolutions~\eqref{eq:ck-x-my}.
Using~\cite[Lemma~5.1]{38} it is easy to check that the morphisms of~\(\cE_{n-1}^\bullet\)
extend in a unique way to morphisms between these resolutions,
so that we obtain a bicomplex whose terms are~\(\wedge^{n-1-b-c}V^\vee \otimes \Sigma^{c,-b}\cU^\vee\), see~\eqref{eq:bicomplex}.
In fact, these unique extensions can be written very explicitly, we do this in Corollary~\ref{cor:bicomplex-maps}.

Note that the bicomplex~\eqref{eq:bicomplex} has exactly the same form as the bicomplex of~\cite[Proposition~5.3]{lines} 
that was crucial for the proof of fullness of the exceptional collection on~\(\IGr(2,2n)\).
Thus, our computation shows that the formula for the maps in the bicomplex suggested in~\cite{lines} is incorrect, see Remark~\ref{rem:bicomplex2}. Therefore Theorem~\ref{thm:bicomplex} fills the gap in the proof of~\cite[Proposition~5.3]{lines}.
\end{remark}

Now we can prove the first case of our main theorem.

\begin{proof}[of Theorem~\textup{\ref{thm:main}} for~\(k = 2\)]
Recall that the bundle~\(\cE^{a,b}\) satisfies~\eqref{eq:E=K^*}.
Therefore, dualizing~\eqref{eq:ck-special} and twisting it by~\(\cO(-1)\), we obtain the complex~\eqref{eq:ckk-special}.
Now the isomorphisms~\eqref{eq:ch-cckk} follow from~\eqref{eq:ch-ck}, because the symplectic bundle~\(\cS = \cS_2\) is self-dual. 
\end{proof}

\section{General isotropic Grassmannians}

In this section we construct secondary staircase complexes and compute their cohomology
on the isotropic Grassmanians~\(\IGr(k,V)\) with~\(3 \le k \le n\).
The main tool is the natural Fourier--Mukai functor 
between the derived categories of Grassmannians and isotropic Grassmannians.

\subsection{Fourier--Mukai transforms}
\label{ss:fm}

Consider the commutative diagram of (isotropic) partial flag varieties
and their natural embeddings and projections to (isotropic) Grassmannians:
\begin{equation}
\label{eq:diagram-igr-gr}
\vcenter{
\xymatrix@C=3em{
\IGr(2,V) \ar[d]_{\iota_2} &
\IFl(2,k;V) \ar[d]_{\iota} \ar[l]_{\pi_2} \ar[r]^{\pi_k} &
\IGr(k,V) \ar[d]^{\iota_k} 
\\
\Gr(2,V) &
\Fl(2,k;V) \ar[l]_{\tilde\pi_2} \ar[r]^{\tilde\pi_k} &
\Gr(k,V).
}}
\end{equation}
Consider also the induced Fourier--Mukai functors
\begin{align*}
\Phi_k & \coloneqq (\pi_k)_* \circ \pi_2^* \colon \Db(\IGr(2,V)) \to \Db(\IGr(k,V)),\\
\tilde\Phi_k & \coloneqq (\tilde\pi_k)_* \circ \tilde\pi_2^* \colon \Db(\Gr(2,V)) \to \Db(\Gr(k,V)),
\end{align*}
where the pullbacks and pushforwards are derived.
We start with a simple observation.

\begin{lemma}
\label{lem:phi-tphi}
For any object~\mbox{\(\cF \in \Db(\Gr(2,V))\)} we have~\(\Phi_k(\iota_2^*\cF) \cong \iota_k^*\tilde\Phi_k(\cF)\).
\end{lemma}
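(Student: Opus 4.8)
The plan is to reduce the statement to two standard facts about the diagram~\eqref{eq:diagram-igr-gr}: the commutativity of its left square and a base-change isomorphism for its right square. Unwinding the definitions (with all functors derived), the left-hand side of the lemma is~\(\Phi_k(\iota_2^*\cF) = (\pi_k)_*\pi_2^*\iota_2^*\cF\) and the right-hand side is~\(\iota_k^*\tilde\Phi_k(\cF) = \iota_k^*(\tilde\pi_k)_*\tilde\pi_2^*\cF\). So it suffices to produce the chain of isomorphisms
\[
(\pi_k)_*\pi_2^*\iota_2^*\cF \;\cong\; (\pi_k)_*\iota^*\tilde\pi_2^*\cF \;\cong\; \iota_k^*(\tilde\pi_k)_*\tilde\pi_2^*\cF,
\]
where the first arises from the left square and the second from the right square.

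First I would dispatch the left square, which is purely formal. Since~\eqref{eq:diagram-igr-gr} commutes we have the equality of morphisms~\(\iota_2\circ\pi_2 = \tilde\pi_2\circ\iota\colon \IFl(2,k;V)\to\Gr(2,V)\), and hence, by functoriality of derived pullback, a natural isomorphism~\(\pi_2^*\iota_2^*\cong\iota^*\tilde\pi_2^*\); applying~\((\pi_k)_*\) produces the first isomorphism above.

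The heart of the argument, and the main obstacle, is the second isomorphism, a derived base-change statement for the right square
\[
\begin{tikzcd}
\IFl(2,k;V) \ar[d,"\iota"'] \ar[r,"\pi_k"] & \IGr(k,V) \ar[d,"\iota_k"] \\
\Fl(2,k;V) \ar[r,"\tilde\pi_k"] & \Gr(k,V).
\end{tikzcd}
\]
I would first verify that this square is cartesian: the fibre product~\(\Fl(2,k;V)\times_{\Gr(k,V)}\IGr(k,V)\) parametrizes flags~\(U_2\subset U_k\) with~\(U_k\) isotropic, and since~\(U_2\subset U_k\) forces~\(U_2\) to be isotropic as well, this fibre product coincides with~\(\IFl(2,k;V)\). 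At the level of scheme structure, \(\IGr(k,V)\) is the zero locus in~\(\Gr(k,V)\) of the section of~\(\wedge^2\cU_k^\vee\) induced by~\(\omega\), and its pullback along~\(\tilde\pi_k\) cuts out exactly~\(\IFl(2,k;V)\) inside~\(\Fl(2,k;V)\), because the equation~\(\omega|_{\cU_2}=0\) factors through~\(\omega|_{\cU_k}=0\) via the restriction map~\(\wedge^2\cU_k^\vee\to\wedge^2\cU_2^\vee\); this identifies the two scheme structures. Now~\(\tilde\pi_k\) is a Grassmann bundle, with fibre~\(\Gr(2,U_k)\) over~\([U_k]\), hence flat and proper, so the square is automatically Tor-independent. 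Tor-independent base change then yields~\(\iota_k^*(\tilde\pi_k)_*\cong(\pi_k)_*\iota^*\) as functors, and evaluating this at~\(\tilde\pi_2^*\cF\) gives the required second isomorphism. Composing the two isomorphisms finishes the proof.
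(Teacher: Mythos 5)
Your proof is correct and follows essentially the same route as the paper: commutativity of the left square gives~\(\pi_2^*\iota_2^*\cong\iota^*\tilde\pi_2^*\), and base change for the Cartesian right square (with~\(\tilde\pi_k\) flat) gives~\(\iota_k^*(\tilde\pi_k)_*\cong(\pi_k)_*\iota^*\). The only difference is that you spell out the verification that the right square is Cartesian at the level of scheme structures, which the paper takes for granted; this is a welcome addition but not a different argument.
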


\begin{proof}
Indeed, the right square in~\eqref{eq:diagram-igr-gr} is Cartesian with flat horizontal arrows, hence
\begin{equation*}
\Phi_k(\iota_2^*\cF) =
(\pi_k)_*\pi_2^*\iota_2^*\cF \cong
(\pi_k)_*\iota^*\tilde\pi_2^*\cF \cong
\iota_k^*(\tilde\pi_k)_*\tilde\pi_2^*\cF =
\iota_k^*\tilde\Phi_k(\cF),
\end{equation*}
where the first isomorphism follows from commutativity of the left square,
and the second is the base change formula for the right square.
\end{proof}

We apply these functors to the simplest irreducible equivariant vector bundles.
In the computations, we use Borel--Bott--Weil theorem for the morphisms~\(\tilde\pi_k\) and~\(\pi_k\).
Note that these morphisms are fibrations with fiber~\(\Gr(2,k)\),
so to compute the derived pushforward of an equivariant vector bundle 
associated with a weight~\(\beta\) of~\(\GL(V)\) or~\(\Sp(V)\),
we consider the sum~\(\bar\beta\) of the first \(k\)-components of the weight~\(\beta\)
with the special weight
\begin{equation*}
\rho_k \coloneqq (k,k-1,\dots,1)
\end{equation*}
of the group~\(\GL_k\).
If all the components of~\(\bar\beta\) are distinct, 
the derived pushforward is isomorphic to the equivariant bundle whose first~\(k\) components 
are given by the weight~\(\sigma(\bar\beta) - \rho_k\)
(here~\(\sigma \in \mathfrak{S}_k\) is the minimal permutation such that the components of~\(\sigma(\bar\beta)\) are decreasing)
and the remaining components coincide with those of the weight~\(\beta\); 
this equivariant bundle is shifted in the derived category to the right by the length of~\(\sigma\).
Otherwise (if some components of~\(\bar\beta\) coincide), the derived pushforward is zero.

Now we do the computations: the first is very easy.

\begin{lemma}
\label{lem:tphi-cu}
Assume~\(\alpha_1 \ge \alpha_2\) and~\(\alpha_1 \ge -1\).
Then
\begin{equation}
\label{eq:thpik-sigma}
\tilde\Phi_k(\Sigma^{\alpha_1,\alpha_2}\cU_2^\vee) \cong 
\begin{cases}
\Sigma^{\alpha_1,\alpha_2,0,\dots,0}\cU_k^\vee, & \text{if~\(\alpha_2 \ge 0\)},\\
0, & \text{if~\(-1 \ge \alpha_2 \ge 2 - k\)},\\
\Sigma^{\alpha_1,-1,\dots,-1,k-2+\alpha_2}\cU_k^\vee[2-k], & \text{if~\(1-k \ge \alpha_2\)}.
\end{cases}
\end{equation}
\end{lemma}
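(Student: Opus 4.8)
The plan is to compute the derived pushforward $\tilde\Phi_k(\Sigma^{\alpha_1,\alpha_2}\cU_2^\vee) = (\tilde\pi_k)_*\tilde\pi_2^*\Sigma^{\alpha_1,\alpha_2}\cU_2^\vee$ by applying the Borel--Bott--Weil recipe described just before the lemma to the fibration $\tilde\pi_k\colon \Fl(2,k;V) \to \Gr(k,V)$ with fiber $\Gr(2,k)$. First I would identify the bundle whose pushforward we are taking: on $\Gr(k,V)$ the tautological $\cU_k$ of rank $k$ restricts along $\tilde\pi_k$, and $\tilde\pi_2^*\Sigma^{\alpha_1,\alpha_2}\cU_2^\vee$ is the equivariant bundle associated to the $\GL_k$-weight $(\alpha_1,\alpha_2)$ on the relative tautological rank-$2$ subbundle inside $\cU_k$ (fiberwise this is a Schur functor on the dual tautological subbundle of $\Gr(2,k)$). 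Concretely, I think of the full weight $\beta$ on $\GL(V)$ as $(\alpha_1,\alpha_2,0,\dots,0)$ (with $k$ relevant leading entries and zeros afterward), so that $\bar\beta$, the sum of the first $k$ components of $\beta$ with $\rho_k = (k,k-1,\dots,1)$, equals
\begin{equation*}
\bar\beta = (\alpha_1+k,\ \alpha_2+k-1,\ k-2,\ k-3,\ \dots,\ 1,\ 0).
\end{equation*}

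Next I would run the combinatorics of sorting $\bar\beta$ into strictly decreasing order. The last $k-2$ entries $k-2, k-3, \dots, 0$ are already the descending string $k-2 > \dots > 0$, and the first two entries $\alpha_1+k \ge \alpha_2+k-1$ are $\ge$ each other by hypothesis $\alpha_1 \ge \alpha_2$; the whole question is where $\alpha_2+k-1$ falls relative to the tail. Three regimes arise exactly as in the statement. If $\alpha_2 \ge 0$ then $\alpha_2 + k - 1 \ge k-1 > k-2$, so $\bar\beta$ is already strictly decreasing, $\sigma$ is trivial, no shift occurs, and subtracting $\rho_k$ returns the weight $(\alpha_1,\alpha_2,0,\dots,0)$: this is the first case. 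If $-1 \ge \alpha_2 \ge 2-k$, then $\alpha_2+k-1$ lands in the range $[1,k-2]$ and hence collides with one of the tail entries (a repeated component of $\bar\beta$), so by the recipe the pushforward vanishes: this is the second case. If $\alpha_2 \le 1-k$, then $\alpha_2+k-1 \le 0$, so $\alpha_2+k-1$ must be moved past the tail entries $k-2,\dots$ to its correct descending position; the number of transpositions needed is exactly $k-2$ (it passes all of $k-2, k-3, \dots$ down to the slot determined by its value), giving the shift $[2-k]$, and subtracting $\rho_k$ from the reordered weight yields $(\alpha_1,-1,\dots,-1,k-2+\alpha_2)$: this is the third case. (The boundary hypothesis $\alpha_1 \ge -1$ guarantees $\alpha_1+k$ stays the strict maximum so the first slot never participates in a collision or reordering.)

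I expect the main obstacle to be purely bookkeeping rather than conceptual: verifying in the third regime that the length $\ell(\sigma)$ of the sorting permutation is precisely $k-2$, and that after subtracting $\rho_k$ the resulting weight has the stated shape with exactly $k-2$ middle entries equal to $-1$ and last entry $k-2+\alpha_2$. To pin this down cleanly I would track, for the entry $\alpha_2+k-1 \le 0$, that it must be transposed past each of the $k-2$ strictly larger tail values $k-2,\dots,1,0$ (using $\alpha_2 \le 1-k$, so it is strictly smaller than $0$ as well), which forces $\ell(\sigma)=k-2$ and places it in the final coordinate; the intermediate entries, being the unchanged $k-2,\dots,1$ shifted by $\rho_k$, each contribute $-1$. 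Since $\Sigma^{\lambda}\cU_k^\vee$ corresponds to a dominant weight read off from the sorted output minus $\rho_k$, and the whole calculation is $\GL(V)$-equivariant and fiberwise, the three cases assemble into the claimed isomorphism.
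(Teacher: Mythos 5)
Your overall strategy is exactly the paper's: apply the Borel--Bott--Weil recipe to the fibration \(\tilde\pi_k\colon \Fl(2,k;V)\to\Gr(k,V)\) with fiber \(\Gr(2,k)\), form \(\bar\beta\), and sort. However, your displayed \(\bar\beta\) contains an off-by-one error that corrupts precisely the bookkeeping this lemma is about. Since \(\rho_k = (k,k-1,\dots,1)\) and the first \(k\) components of the weight are \((\alpha_1,\alpha_2,0,\dots,0)\), the correct sum is
\[
\bar\beta = (\alpha_1+k,\ \alpha_2+k-1,\ k-2,\ k-3,\ \dots,\ 1),
\]
a vector with \(k\) entries whose tail ends at \(1\); your version ends the tail at \(0\), giving \(k+1\) entries. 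This is not cosmetic: with your tail, at the boundary \(\alpha_2 = 1-k\) the entry \(\alpha_2+k-1 = 0\) would collide with the tail entry \(0\) and the pushforward would vanish, contradicting the third case of the lemma (which asserts a nonzero answer there); and for \(\alpha_2 < 1-k\) the second entry would have to pass \(k-1\) tail entries, forcing \(\ell(\sigma) = k-1\) and a shift \([1-k]\) instead of the claimed \([2-k]\). Your prose then asserts the correct conclusions anyway, but they do not follow from your own \(\bar\beta\): you speak of the ``\(k-2\) strictly larger tail values \(k-2,\dots,1,0\)'' (that list has \(k-1\) values), and you claim \(\alpha_2+k-1\) is strictly smaller than \(0\) when \(\alpha_2 \le 1-k\), whereas equality holds at \(\alpha_2 = 1-k\).

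The fix is immediate and restores the paper's argument: with the correct tail \((k-2,\dots,1)\), whose minimum is \(1\), the hypothesis \(\alpha_2 \le 1-k\) gives \(\alpha_2+k-1 \le 0 < 1\), so the second entry is strictly below every tail entry, there is no collision, \(\ell(\sigma) = k-2\), and the sorted weight is \((k+\alpha_1,\,k-2,\dots,1,\,k-1+\alpha_2)\), from which subtracting \(\rho_k\) yields \((\alpha_1,-1,\dots,-1,k-2+\alpha_2)\). Likewise the vanishing regime is exactly \(1 \le \alpha_2+k-1 \le k-2\), i.e.\ \(2-k \le \alpha_2 \le -1\), matching the second case. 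Once \(\bar\beta\) is corrected, your proof coincides with the one in the paper.
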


\begin{proof}
The bundle~\(\Sigma^{\alpha_1,\alpha_2}\cU_2^\vee\) is \(\GL(V)\)-equivariant and the corresponding weight is
\begin{equation*}
\bar\beta = (\alpha_1,\alpha_2,0,\dots,0) + \rho_k = (k+\alpha_1,k-1+\alpha_2,k-2,\dots,1).
\end{equation*}
If~\(\alpha_2 \ge 0\) this weight is strictly dominant, so the first line of~\eqref{eq:thpik-sigma} follows.
If~\(-1 \ge \alpha_2 \ge 2 - k\), the second component of the weight is equal to one of the other components, 
hence the pushforward is zero, so the second line of~\eqref{eq:thpik-sigma} follows.
Finally, if~\(1 - k \ge \alpha_2\), to make the weight dominant, we need to move its second component to the far right.
The length of the corresponding permutation is~\(k - 2\), and since
\begin{equation*}
(k+\alpha_1,k-2,\dots,1,k-1+\alpha_2) - \rho_k = (\alpha_1,-1,\dots,-1,k-2+\alpha_2),
\end{equation*}
the third line of~\eqref{eq:thpik-sigma} also follows.
\end{proof}

The second computation is more complicated.

\begin{lemma}
\label{lem:phi-cs}
For~\(0 \le t \le n - 2\) we have
\begin{equation}
\label{eq:phi-k-wt-sp-cs-m1}
\Phi_k(\wedge^t_\Sp\cS_2(-1)) \cong 
\begin{cases}
0, & \text{if~\(0 \le t \le k - 3\)},\\
\wedge^{t-k+2}_\Sp\cS_k(-1), & \text{if~\(k - 2 \le t \le n - 2\)}.
\end{cases}
\end{equation}
\end{lemma}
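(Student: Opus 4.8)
The plan is to realize $\cS_k$ as the symplectic reduction of $\pi_2^*\cS_2$ along the tautological isotropic flag and then push forward along the $\Gr(2,k)$-fibration $\pi_k$ by relative Borel--Bott--Weil. On $\IFl(2,k;V)$ write $\cU_2\subset\cU_k$ for the pullbacks of the tautological isotropic subbundles and set $W\coloneqq\cU_k/\cU_2$, a bundle of rank $k-2$ which on the fibre $\Gr(2,\cU_k)$ of $\pi_k$ is the universal quotient bundle. From the chain $\cU_2\subset\cU_k\subset\cU_k^\perp\subset\cU_2^\perp$ one reads off that, inside $\pi_2^*\cS_2=\cU_2^\perp/\cU_2$, the subbundle $W=\cU_k/\cU_2$ is isotropic with $W^\perp=\cU_k^\perp/\cU_2$ and $W^\perp/W=\cU_k^\perp/\cU_k=\pi_k^*\cS_k$, while $\pi_2^*\cS_2/W^\perp\cong W^\vee$ via $\omega$. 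Thus the filtration $0\subset W\subset W^\perp\subset\pi_2^*\cS_2$ has associated graded $W\oplus\cS_k\oplus W^\vee$. I also record the twist identity $\pi_2^*\cO(-1)=\det\cU_2=\pi_k^*\cO(-1)\otimes\det W^\vee$, which follows from $\det\cU_k=\det\cU_2\otimes\det W$.

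First I would reduce to \emph{ordinary} exterior powers. For $t\le n-2$ the Lefschetz map is injective, so \eqref{eq:def-wedge-sp} gives a short exact sequence $0\to\wedge^{t-2}\cS_2\xrightarrow{\ \omega_\cS\ }\wedge^t\cS_2\to\wedge^t_\Sp\cS_2\to0$; twisting by $\cO(-1)$ and applying the exact functor $\Phi_k$ reduces the statement to computing $\Phi_k(\wedge^t\cS_2(-1))$ for the ordinary power $\wedge^t$, together with an identification of the resulting connecting map. For the ordinary powers I would use the filtration above: $\wedge^t\cS_2$ is filtered with graded pieces $\wedge^aW\otimes\wedge^b\cS_k\otimes\wedge^cW^\vee$, $a+b+c=t$. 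After the twist by $\cO(-1)$ and factoring out the pulled-back $\pi_k^*\cO(-1)$ together with the pulled-back $\wedge^b\cS_k$, computing $\Phi_k$ of each graded piece amounts to computing $(\pi_k)_*\big(\wedge^aW\otimes\wedge^cW^\vee\otimes\det W^\vee\big)$ on the $\Gr(2,k)$-fibres.

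The key technical point --- and the main obstacle --- is the vanishing lemma
\[
(\pi_k)_*\big(\wedge^aW\otimes\wedge^cW^\vee\otimes\det W^\vee\big)\cong
\begin{cases}
\cO,&(a,c)=(k-2,0),\\
0,&\text{otherwise,}
\end{cases}
\]
for $0\le a,c\le k-2$, which I would prove by Borel--Bott--Weil on $\Gr(2,k)$: after writing $\wedge^cW^\vee\otimes\det W^\vee\cong\wedge^{k-2-c}W\otimes(\det W^\vee)^{2}$ and decomposing $\wedge^aW\otimes\wedge^{k-2-c}W$ into Schur functors of $W$, one checks that the associated $\GL_k$-weight (with the two $\cU_2$-slots equal to $0$) becomes singular after adding $\rho_k$ in every case except $(a,c)=(k-2,0)$, where $\wedge^{k-2}W\otimes\det W^\vee=\det W\otimes\det W^\vee=\cO$ contributes $\cO$ in cohomological degree $0$. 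Since exactly one graded piece survives (and only in degree $0$), the filtration spectral sequence degenerates and yields $\Phi_k(\wedge^t\cS_2(-1))\cong\wedge^{t-k+2}\cS_k(-1)$, with the convention that negative exterior powers vanish; in particular it is $0$ for $t<k-2$, i.e.\ for $0\le t\le k-3$.

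Finally I would feed this back into the triangle coming from the Lefschetz sequence, which becomes $\wedge^{t-k}\cS_k(-1)\to\wedge^{t-k+2}\cS_k(-1)\to\Phi_k(\wedge^t_\Sp\cS_2(-1))\to[1]$, the first arrow being the image under $\Phi_k$ of $\omega_\cS$. By $\Sp(V)$-equivariance the space of maps $\wedge^{t-k}\cS_k(-1)\to\wedge^{t-k+2}\cS_k(-1)$ is one-dimensional, spanned by $\wedge\,\omega_{\cS_k}$, so this arrow is a scalar multiple of it; a rank count (or evaluation at a point) shows the scalar is nonzero, hence the arrow is the injective Lefschetz map for $\cS_k$ (again using $t\le n-2$). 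Its cokernel is $\wedge^{t-k+2}_\Sp\cS_k(-1)$ by \eqref{eq:def-wedge-sp}, proving the second case, while for $0\le t\le k-3$ both ordinary-wedge pushforwards vanish and we obtain $0$, proving the first case. The hardest part is the Borel--Bott--Weil vanishing lemma above, i.e.\ controlling all graded pieces of the filtration simultaneously and verifying that only the $\det W$ term survives.
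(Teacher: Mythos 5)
Your reduction to ordinary exterior powers, the filtration of \(\pi_2^*\cS_2\) coming from the flag \(\cU_2\subset\cU_k\subset\cU_k^\perp\subset\cU_2^\perp\), and the Borel--Bott--Weil vanishing for the graded pieces are exactly the computation in the paper's proof; the paper keeps the twist \(\pi_2^*\cO(-1)\) as a \((-1,-1)\) in the two \(\cU_2\)-slots of the weight and decomposes \(\wedge^iW\otimes\wedge^jW^\vee\) by Pieri, whereas you move the twist into \(\det W^\vee\), but these bookkeepings are equivalent and your vanishing lemma (only \((a,c)=(k-2,0)\) survives, in degree \(0\)) is correct.

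The gap is in your identification of the connecting map. You assert that by \(\Sp(V)\)-equivariance the space of maps \(\wedge^{t-k}\cS_k(-1)\to\wedge^{t-k+2}\cS_k(-1)\) is one-dimensional, spanned by \(\omega_{\cS_k}\wedge(-)\). This is false as soon as \(t-k\ge 2\), which occurs in the allowed range \(k-2\le t\le n-2\) whenever \(n\ge k+4\): equivariant bundle maps are the same as maps of fibres equivariant under the Levi \(\GL_k\times\Sp_{2(n-k)}\) of the parabolic (the unipotent radical acts trivially on both fibres), and since \(\wedge^{a}S\cong\bigoplus_{j\ge0}\wedge^{a-2j}_{\Sp}S\) is multiplicity-free with pairwise distinct irreducible summands, the space \(\operatorname{Hom}_{\Sp_{2(n-k)}}(\wedge^{a}S,\wedge^{a+2}S)\) has dimension \(\lfloor a/2\rfloor+1\). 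For instance, for \(a=2\) the composition \(\wedge^2\cS_k\twoheadrightarrow\cO\xrightarrow{\ \omega\wedge\omega\ }\wedge^4\cS_k\) is equivariant, nonzero, and not injective. Consequently, knowing that \(\Phi_k(\omega_{\cS_2}\wedge-)\) is nonzero does not force it to be the Lefschetz map, and its kernel and cokernel could a priori be different, so your "rank count / evaluation at a point" does not close the argument. The fix is to track the map through your own filtration: locally \(\omega_{\cS_2}=\omega_{\cS_k}+\eta\) with \(\eta\) of \(W\)-degree \(1\) (it lies in \(W\wedge\pi_2^*\cS_2\)), so on the unique surviving graded piece \(\wedge^{k-2}W\otimes\wedge^{t-k}\cS_k\otimes\det W^\vee\) the term \(\eta\wedge(-)\) would raise the \(W\)-degree above \(\operatorname{rank}W=k-2\) and hence vanishes; therefore the induced map on pushforwards is precisely \(\omega_{\cS_k}\wedge(-)\), and with this substitute for your one-dimensionality claim the rest of your argument goes through.
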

\begin{proof}
The flag~\(0 \subset \cU_2 \subset \cU_k \subset \cU_k^\perp \subset \cU_2^\perp\) on~\(\IFl(2,k;V)\)
induces a filtration on~\(\pi_2^*\cS_2\) with factors
\begin{equation*}
\cU_k / \cU_2,
\qquad 
\cU_k^\perp/\cU_k \cong \pi_k^*(\cS_k),
\qquad 
\cU_2^\perp / \cU_k^\perp \cong (\cU_k / \cU_2)^\vee.
\end{equation*}
By~\eqref{eq:def-wedge-sp} we need to compute~\(\Phi_k(\wedge^m\cS_2(-1))\) for~\(m = t - 2\) and~\(m = t\) and take the cone of the morphism between them induced by~\(\omega_S\). 
Note that~\(\wedge^m\cS_2\) has a filtration with terms
\begin{equation*}
\wedge^i(\cU_k/\cU_2) \otimes \wedge^j(\cU_k/\cU_2)^\vee \otimes \pi_k^*(\wedge^{m-i-j} \cS_k),
\qquad 
0 \le i,j \le k - 2,\quad i + j \le m.
\end{equation*}

Using the Pieri formula we rewrite this product as
\begin{equation*}
\bigoplus_{s = \left\lceil\! \frac{i+j+2-k}{2} \!\right\rceil}^{\min(i,j)} 
\Sigma^{(1^{j-s},\,0^{k-2-i-j+2s},\,(-1)^{i-s})} (\cU_k/\cU_2)^\vee \otimes \pi_k^*(\wedge^{m-i-j} \cS_k).
\end{equation*}
To compute~\(\Phi_k(\wedge^m \cS_2(-1))\) we twist this filtration by~\(\pi_2^*(\cO(-1))\) and push forward along~\(\pi_k\).
Using the projection formula, we reduce the computation to the objects
\begin{equation*}
(\pi_k)_*\Big(\Sigma^{(1^{j-s},\,0^{k-2-i-j+2s},\,(-1)^{i-s})} (\cU_k/\cU_2)^\vee \otimes \pi_2^*(\cO(-1))\Big).
\end{equation*}
The corresponding weight is~\((-1,-1,1^{j-s},\,0^{k-2-i-j+2s},\,(-1)^{i-s})\)
and its sum with~\(\rho_k\) is 
\begin{equation}
\label{eq:weight}
\bar\beta = 
(k-1,k-2,
\underbrace{k-1,\dots,k-j+s}_{\text{\(j-s\) terms}}, 
\underbrace{k-j+s-2,\dots,i-s+1}_{\text{\(k-2-i-j+2s\) terms}},
\underbrace{i-s-1,\dots,0}_{\text{\(i-s\) terms}}).
\end{equation}
If~\(j - s > 0\), the first and third coordinates are the same, hence the pushforward is zero.
If~\(j - s = 0\) but~\(k-2-i-j+2s = k - 2 - i + s > 0\), the second and third coordinates are the same, hence the pushforward is zero again.
Thus, the only case where the pushforward is nonzero is the case~\(s = j = i + 2 - k\).
But since~\(i \le k - 2\) and~\(j \ge 0\), we conclude that we must have~\(s = j = 0\) and~\(i = k - 2\).
In this case the weight~\eqref{eq:weight} equals~\((k -1, k - 2, k - 3, \dots, 0)\),
and the pushforward of the corresponding term of the filtration of~\(\pi_2^*(\wedge^m \cS_2(-1))\) is 
\begin{equation*}
(\pi_k)_*\Big(\Sigma^{(-1,-1,\dots,-1)} (\cU_k/\cU_2)^\vee \otimes \pi_2^*(\cO(-1)) \otimes \pi_k^*(\wedge^{m - k + 2} \cS_k) \Big) 
\cong \wedge^{m-k+2} \cS_k(-1).
\end{equation*}
In particular, the irreducible bundles have no higher pushforwards.
Thus, 
we get:
\begin{equation*}
	\Phi_k(\wedge^t_\Sp \cS_2(-1)) = \operatorname{Coker}(\wedge^{t-k} \cS_k(-1) \overset{\omega_\cS}\hookrightarrow \wedge^{t-k+2} \cS_k(-1)).
\end{equation*}
Finally, note that both terms vanish
if~\(t \le k - 3\), 
which gives us the first line of~\eqref{eq:phi-k-wt-sp-cs-m1},
and if~\(k - 2 \le t \le n - 2\), we obtain the second line.
\end{proof}

\subsection{General secondary staircase complexes}

In this subsection we show that the functor~\(\tilde\Phi_k\) 
takes some staircase complexes of~\(\Gr(2,V)\) to staircase complexes of~\(\Gr(k,V)\).
As a consequence, we show that~\(\tilde\Phi_k\) takes bundles~\(\cK_{\Gr(2,V)}^{\alpha_1,\alpha_2}\) to bundles~\(\cK_{\Gr(k,V)}^{\alpha_1,\alpha_2}\).

The general definition of a staircase complex on~\(\Gr(k,V) = \Gr(k,2n)\) can be found in~\cite{Fonarev_2013}.
Such a complex is defined for any weight~\(\alpha\) of~\(\GL_k\) satisfying the condition
\begin{equation*}
\alpha_1 \ge \alpha_2 \ge \alpha_3 \ge \dots \ge \alpha_k \ge \alpha_1 - 2n + k .
\end{equation*}
For our purposes it is enough to consider the case where~\(\alpha_3 = \dots = \alpha_k = 0\), hence
\begin{equation*}
2n - k \ge \alpha_1 \ge \alpha_2 \ge 0.
\end{equation*}
Under this assumption the staircase complex takes the form
\begin{multline}
\label{eq:staircase-3-lines}
0 \to 
\wedge^{2n}V^\vee \otimes \Sigma^{\alpha_2-1,-1,\dots,-1,\alpha_1-2n+k-1}\cU^\vee \to 
\dots \to 
\wedge^{\alpha_1+k+2} V^\vee \otimes \Sigma^{\alpha_2-1,-1,-1,\dots,-1}\cU^\vee \\ \qquad\quad  \to 
\wedge^{\alpha_1+1} V^\vee \otimes \Sigma^{\alpha_2-1,0,0,\dots,0}\cU^\vee \to 
\dots \to 
\wedge^{\alpha_1-\alpha_2+2} V^\vee \otimes \Sigma^{\alpha_2-1,\alpha_2-1,0,\dots,0}\cU^\vee \\ \to 
\wedge^{\alpha_1-\alpha_2} V^\vee \otimes \Sigma^{\alpha_2,\alpha_2,0,\dots,0}\cU^\vee \to 
\dots \to
\Sigma^{\alpha_1,\alpha_2,0,\dots,0}\cU^\vee \to 
0.
\end{multline}
Here in the top row the last component  of the weight gradually increases,
in the middle row the second component does the same,
and in the bottom row the first component does so.
In the special case where~\(\alpha_2 = 0\) the middle row disappears.

\begin{proposition}
\label{prop:thpi-staircase}
If~\(2n - k \ge \alpha_1 \ge \alpha_2 \ge 0\), the staircase complex~\eqref{eq:staircase-3-lines} 
is the image under the functor~\(\tilde\Phi_k\)
of the staircase complex~\eqref{eq:staircase-2lines}.
\end{proposition}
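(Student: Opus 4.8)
The plan is to apply $\tilde\Phi_k$ to \eqref{eq:staircase-2lines} term by term, read off the terms from Lemma~\ref{lem:tphi-cu}, and then pin down the differentials by the uniqueness of equivariant maps (Remark~\ref{rem:staircase-unique}). Every term of \eqref{eq:staircase-2lines} has the shape $\wedge^j V^\vee \otimes \Sigma^{\beta_1,\beta_2}\cU_2^\vee$ with $\wedge^j V^\vee$ a trivial bundle, so the projection formula gives $\tilde\Phi_k(\wedge^j V^\vee \otimes \Sigma^{\beta_1,\beta_2}\cU_2^\vee) \cong \wedge^j V^\vee \otimes \tilde\Phi_k(\Sigma^{\beta_1,\beta_2}\cU_2^\vee)$, and I may substitute Lemma~\ref{lem:tphi-cu}. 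The bottom row of \eqref{eq:staircase-2lines}, carrying weights $(\beta_1,\alpha_2)$ with $\alpha_2\ge 0$, falls under the first case of the lemma and lands on the bottom row of \eqref{eq:staircase-3-lines}. The top row carries weights $(\alpha_2-1,s)$, and as $s$ runs over its range it meets all three cases: for $s\ge 0$ the term lands on the middle row of \eqref{eq:staircase-3-lines}; for $2-k\le s\le -1$ it is annihilated; and for $s\le 1-k$ it lands on the top row of \eqref{eq:staircase-3-lines}, shifted by $[2-k]$.

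The next step is to check that, once the shifts are taken into account, the surviving images sit in exactly the cohomological positions of \eqref{eq:staircase-3-lines}. Placing the rightmost term of \eqref{eq:staircase-2lines} in degree $0$, the bottom-row and $s\ge 0$ images keep their degrees, while the $s\le 1-k$ images move up by $k-2$. The point is that this upward shift of $k-2$ exactly cancels the gap of $k-2$ degrees left by the annihilated terms $2-k\le s\le -1$: the $s=0$ image and the $s=1-k$ image become neighbours, so the nonzero images form one uninterrupted complex, with the same weights and wedge exponents as the terms of \eqref{eq:staircase-3-lines}. The single corner of \eqref{eq:staircase-3-lines} joining the middle and top rows decreases the components $2,\dots,k$ of the weight from $0$ to $-1$ at once, which accounts for its wedge jump.

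For the differentials I would argue by functoriality and uniqueness. Inside each of the three blocks---the bottom row, the $s\ge 0$ part, and the $s\le 1-k$ part---no term is skipped, so applying $\tilde\Phi_k$ to the (unique nonzero equivariant) differentials of \eqref{eq:staircase-2lines} yields equivariant maps between the corresponding images; by Remark~\ref{rem:staircase-unique}, now on $\Gr(k,V)$, these must be the staircase differentials of \eqref{eq:staircase-3-lines}. This settles every differential except the one at the middle--top corner.

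The main obstacle is precisely this corner differential, because in \eqref{eq:staircase-2lines} it is not a single map but factors through the terms $2-k\le s\le -1$ that $\tilde\Phi_k$ kills; it is therefore a connecting map rather than a naive image. I would extract it from the hypercohomology spectral sequence $E_1^{p,q}=R^q(\tilde\pi_k)_*\tilde\pi_2^*(C^p)\Rightarrow 0$ of the functor $R(\tilde\pi_k)_*\circ\tilde\pi_2^*$ applied to the complex $C^\bullet$ of \eqref{eq:staircase-2lines}; the spectral sequence abuts to $0$ because $\tilde\pi_2^*$ is exact and \eqref{eq:staircase-2lines} is acyclic. By the computation above its $E_1$-page has exactly two nonzero rows, $q=0$ (the bottom row together with the $s\ge 0$ part) and $q=k-2$ (the $s\le 1-k$ part), and the $d_1$-differentials are the staircase differentials already identified. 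The corner map is then the higher differential $d_{k-1}\colon E_{k-1}^{p,\,k-2}\to E_{k-1}^{p+k-1,\,0}$; since the abutment vanishes and $d_{k-1}$ is the only differential that can connect the two rows, it must supply the missing corner map, and by uniqueness of equivariant maps it is the corner differential of \eqref{eq:staircase-3-lines}. Assembling the two rows along $d_{k-1}$ reconstitutes the single complex \eqref{eq:staircase-3-lines}, completing the identification.
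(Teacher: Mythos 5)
Your proposal is correct and is essentially the paper's own argument: the paper likewise applies \(\tilde\Phi_k\) term by term via Lemma~\ref{lem:tphi-cu} (after splitting the top line of \eqref{eq:staircase-2lines} at the term with vanishing second weight component), observes that the \(k-2\) annihilated terms are exactly compensated by the shift \([2-k]\) of the surviving left part, and identifies the resulting differentials with those of \eqref{eq:staircase-3-lines} by the uniqueness of nonzero equivariant maps (Remark~\ref{rem:staircase-unique}). The only difference is one of explicitness: the paper subsumes the corner differential under that single appeal to uniqueness, whereas you exhibit the mechanism producing it as the \(d_{k-1}\) differential of the hypercohomology spectral sequence converging to zero --- a more careful rendering of the same route rather than a different one.
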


\begin{proof}
First, assuming \(\alpha_2 > 0\), we rewrite~\eqref{eq:staircase-2lines} breaking for convenience its top line at the term where the second component of the weight is zero:
\begin{multline}
\label{eq:staircase-3lines-ogr2}
0 \to 
\wedge^{2n}V^\vee \otimes \Sigma^{\alpha_2-1,\alpha_1+1-2n}\cU_2^\vee \to
\dots \to
\wedge^{\alpha_1+2}V^\vee \otimes \Sigma^{\alpha_2-1,-1}\cU_2^\vee \\ \to
\wedge^{\alpha_1+1}V^\vee \otimes \Sigma^{\alpha_2-1,0}\cU_2^\vee \to
\dots \to
\wedge^{\alpha_1-\alpha_2+2}V^\vee \otimes \Sigma^{\alpha_2-1,\alpha_2-1}\cU_2^\vee \\ \to
\wedge^{\alpha_1-\alpha_2}V^\vee \otimes \Sigma^{\alpha_2,\alpha_2}\cU_2^\vee \to
\dots \to
\Sigma^{\alpha_1,\alpha_2}\cU_2^\vee \to 0.
\end{multline}
Applying the functor~\(\tilde\Phi_k\) and using Lemma~\ref{lem:tphi-cu} 
we see that each line of~\eqref{eq:staircase-3lines-ogr2} gives the corresponding line of~\eqref{eq:staircase-3-lines}
(the last~\(k - 2\) terms of the first line of~\eqref{eq:staircase-3lines-ogr2} are taken by~\(\tilde\Phi_k\) to zero, 
but the previous terms are shifted by~\(k - 2\) to the right).
By the uniqueness property of the staircase complex, the differentials in the obtained complex are the same as in~\eqref{eq:staircase-3-lines}.

The case~\(\alpha_2 = 0\) is analogous: we only need to compare the last lines of~\eqref{eq:staircase-3-lines} and~\eqref{eq:staircase-3lines-ogr2}.
\end{proof}

Recall the resolution~\eqref{eq:def-ckk} for the bundles~\(\cK_{\IGr(k,V)}^{\alpha_1,\alpha_2}\).

\begin{corollary}
\label{cor:phi-ck}
If~\(2n - 2 \ge \alpha_1 \ge \alpha_2 \ge 0\), then there is an isomorphism
\begin{equation}
	\Phi_k(\cK_{\IGr(2,V)}^{\alpha_1,\alpha_2}) \cong 
	\begin{cases}
		\cK_{\IGr(k,V)}^{\alpha_1,\alpha_2}, & \text{if \(\alpha_1 \le 2n-k\);}\\
		0, & \text{if \(2n-k+1 \le \alpha_1 \le 2n-2\).}
	\end{cases}
\end{equation}
\end{corollary}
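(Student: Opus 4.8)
The plan is to reduce the computation to the linear Grassmannian by means of Lemma~\ref{lem:phi-tphi}, and then to evaluate $\tilde\Phi_k$ on a suitable resolution of $\cK_{\Gr(2,V)}^{\alpha_1,\alpha_2}$. Since $\cK_{\IGr(2,V)}^{\alpha_1,\alpha_2} = \iota_2^*\cK_{\Gr(2,V)}^{\alpha_1,\alpha_2}$ by definition, Lemma~\ref{lem:phi-tphi} gives
\[
\Phi_k\big(\cK_{\IGr(2,V)}^{\alpha_1,\alpha_2}\big) \cong \iota_k^*\,\tilde\Phi_k\big(\cK_{\Gr(2,V)}^{\alpha_1,\alpha_2}\big),
\]
so it is enough to compute $\tilde\Phi_k(\cK_{\Gr(2,V)}^{\alpha_1,\alpha_2})$ on $\Gr(k,V)$ and then restrict along $\iota_k$, using that $\iota_k^*\cK_{\Gr(k,V)}^{\alpha_1,\alpha_2} = \cK_{\IGr(k,V)}^{\alpha_1,\alpha_2}$. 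The crucial observation is that $\cK_{\Gr(2,V)}^{\alpha_1,\alpha_2}$ is the truncation of the exact staircase complex \eqref{eq:staircase-3lines-ogr2} at the term whose weight has second component $0$. Hence it admits two resolutions read off from that exact complex: a \emph{right} resolution by its middle and bottom lines (this is precisely \eqref{eq:def-ckk} for $k=2$), and a \emph{left} resolution by its top line. Because $\tilde\Phi_k$ is an exact functor, I would compute $\tilde\Phi_k(\cK_{\Gr(2,V)}^{\alpha_1,\alpha_2})$ by applying it termwise to whichever of the two resolutions is adapted to the range of $\alpha_1$, evaluating each term by Lemma~\ref{lem:tphi-cu}.

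For $\alpha_1 \le 2n-k$ I would use the right resolution \eqref{eq:def-ckk}. Every term there has the form $\wedge^i V^\vee \otimes \Sigma^{\beta_1,\beta_2}\cU_2^\vee$ with $\beta_2 \ge 0$, so by the first case of Lemma~\ref{lem:tphi-cu} each is sent by $\tilde\Phi_k$ to the single sheaf $\wedge^i V^\vee \otimes \Sigma^{\beta_1,\beta_2,0,\dots,0}\cU_k^\vee$ with no cohomological shift. Since each term lands in degree~$0$, the exact functor $\tilde\Phi_k$ carries this resolution to the termwise complex, which by Proposition~\ref{prop:thpi-staircase} (which pins down the differentials through uniqueness of staircase maps, and whose hypothesis $\alpha_1 \le 2n-k$ holds here) is exactly the middle-plus-bottom part of the staircase complex \eqref{eq:staircase-3-lines} on $\Gr(k,V)$. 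By \eqref{eq:def-ckk} this complex resolves $\cK_{\Gr(k,V)}^{\alpha_1,\alpha_2}$ and so has cohomology only in degree~$0$, giving $\tilde\Phi_k(\cK_{\Gr(2,V)}^{\alpha_1,\alpha_2}) \cong \cK_{\Gr(k,V)}^{\alpha_1,\alpha_2}$; restricting along $\iota_k$ yields the first case.

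For $2n-k+1 \le \alpha_1 \le 2n-2$ I would instead use the left resolution, i.e.\ the top line of \eqref{eq:staircase-3lines-ogr2}. Its terms are $\wedge^i V^\vee \otimes \Sigma^{\alpha_2-1,\beta_2}\cU_2^\vee$ with second weight $\beta_2$ running over $\{\alpha_1+1-2n,\dots,-1\}$. The assumption $\alpha_1 \ge 2n-k+1$ forces the bottom end $\alpha_1+1-2n \ge 2-k$, while $\alpha_1 \le 2n-2$ forces the top end $\le -1$; thus every $\beta_2$ lies in $\{2-k,\dots,-1\}$, which is precisely the vanishing range of the second case of Lemma~\ref{lem:tphi-cu}. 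Therefore $\tilde\Phi_k$ annihilates every term, and since $\tilde\Phi_k$ is exact it follows that $\tilde\Phi_k(\cK_{\Gr(2,V)}^{\alpha_1,\alpha_2}) = 0$, whence $\Phi_k(\cK_{\IGr(2,V)}^{\alpha_1,\alpha_2}) = 0$.

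The main thing to get right is the bookkeeping that decides which resolution to feed into $\tilde\Phi_k$: one wants every term to land either uniformly in the \enquote{no shift} branch of Lemma~\ref{lem:tphi-cu} (yielding the isomorphism) or uniformly in the \enquote{vanishing} branch (yielding zero), so that the exact functor can be commuted past the resolution without producing stray cohomology in other degrees. This is exactly what the two weight ranges guarantee, and the only genuine verification is the elementary equivalence $\alpha_1+1-2n \ge 2-k \iff \alpha_1 \ge 2n-k+1$ separating the cases. A minor point to check is the degenerate case $\alpha_2 = 0$, where the middle line of the staircase is absent: here the right resolution is just the bottom line and the left resolution just the top line of \eqref{eq:staircase-2lines}, and both arguments above go through verbatim.
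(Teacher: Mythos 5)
Your proof is correct and follows essentially the same route as the paper: reduce to $\Gr(k,V)$ via Lemma~\ref{lem:phi-tphi}, then for $\alpha_1 \le 2n-k$ apply $\tilde\Phi_k$ termwise to the middle-plus-bottom truncation of~\eqref{eq:staircase-3lines-ogr2} (with Proposition~\ref{prop:thpi-staircase} pinning down the differentials), and for $2n-k+1 \le \alpha_1 \le 2n-2$ apply Lemma~\ref{lem:tphi-cu} to the top line, whose terms all lie in the vanishing range. Your version just spells out the weight bookkeeping that the paper summarizes as \enquote{the truncations match up}.
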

\begin{proof}
First, assume~\(\alpha_1 \le 2n - k\).
Since~\(\cK_{\Gr(2,V)}^{\alpha_1,\alpha_2}\) is defined as a truncation of~\eqref{eq:staircase-3lines-ogr2}
and~\(\cK_{\Gr(k,V)}^{\alpha_1,\alpha_2}\) is defined as a truncation of its image~\eqref{eq:staircase-3-lines} under~\(\tilde\Phi_k\),
and the truncations match up, 
it follows that~\(\tilde\Phi_k(\cK_{\Gr(2,V)}^{\alpha_1,\alpha_2}) \cong \cK_{\Gr(k,V)}^{\alpha_1,\alpha_2}\).
Therefore, applying Lemma~\ref{lem:phi-tphi}, we deduce the claim.

Now, assume that~\(2n - k + 1 \le  \alpha_1 \le 2n - 2\).
Then we apply Lemma~\ref{lem:tphi-cu}  to the first row of~\eqref{eq:staircase-3lines-ogr2} 
and conclude that~\(\tilde\Phi_k(\cK_{\Gr(2,V)}^{\alpha_1,\alpha_2})\) is zero.
\end{proof}

\begin{proof}[of Theorem~\textup{\ref{thm:main}} for~\(k \ge 3\)]
We define the complex~\eqref{eq:ckk-special} by applying the functor~\(\Phi_k\) 
to~\(\cK_{t+k-2}^\bullet\) taken on~\(\IGr(2,V)\) and using Corollary~\ref{cor:phi-ck} to identify the terms. 
In particular, the last~\(k-2\) terms vanish, so the image is~\(\cK_{t}^\bullet\). 
It follows from the case~\(k = 2\) of Theorem~\ref{thm:main} 
that the obtained complex~\(\cK_{t}^\bullet\) is quasi-isomorphic to
\begin{equation*}
\cK_{t}^\bullet \cong
\begin{cases}
\Phi_k\left(\wedge_\Sp^{t+k-2}\cS_2(-1)\right), & \text{if~\(0 \le t \le n-2\)},\\
\Phi_k\left(\wedge_\Sp^{2n - 2 - (t+k-2)}\cS_2(-1)\right)[-1], & \text{if \(n \le t \le 2n-k\)},\\
0, & \text{otherwise}.
\end{cases}
\end{equation*}
Now we apply Lemma~\ref{lem:phi-cs} and obtain~\eqref{eq:ch-cckk}.
\end{proof}

\section{Bicomplexes}\label{sec:correct_maps}

In this section for any~\(0 \le t \le 2n-2\) we construct on~\(\IGr(2, V)\) a bicomplex
\begin{equation}
\label{eq:bicomplex}
\begin{tikzcd}[column sep = 15pt]
&&&&& \mathrm{S}^{t} \cU^\vee\\
&&&& \Sy^{t}\cU^\vee(-1) \rar[dotted] & V^\vee \otimes \Sy^{t-1} \cU^\vee \uar\\
&&& \Sigma^{i+1, i+1-t} \cU^\vee \rar[dotted] & \reflectbox{\(\ddots\)} \uar \rar[dotted] &  \vdots \uar \\
&& \Sigma^{i, i+t} \cU^\vee \rar[dotted] &V^\vee \otimes \Sigma^{i, i+1-t} \cU^\vee \uar \rar[dotted] & \reflectbox{\(\ddots\)} \uar \rar[dotted] &\vdots \uar \\
& \Sy^{t}\cU(1) \rar[dotted] & \reflectbox{\(\ddots\)}\uar \rar[dotted]&\reflectbox{\(\ddots\)} \uar \rar[dotted]&\reflectbox{\(\ddots\)} \rar[dotted] \uar &\wedge^{t-1} V^\vee \otimes \cU^\vee\uar\\
\Sy^{t} \cU \rar& V^\vee \otimes \Sy^{t-1} \cU  \uar \rar &\dots \rar \uar & \dots \rar \uar &\wedge^{t-1} V^\vee \otimes \cU \rar \uar & \wedge^{t} V^\vee \otimes \cO, \uar 
\end{tikzcd}
\end{equation}
whose columns are the right resolutions of~\(\cE^{t-b,b}\) from Theorem~\ref{thm:bicomplex},
and the horizontal arrows are chosen in such a way 
that the induced differentials on these bundles coincide with the differentials of the complex~\(\cE_{t}^\bullet\).
\begin{remark}\label{rem:bicomplex2}
	In particular, if~\(t = n-1\), then the totalization of the bicomplex is acyclic. This is the case studied in~\cite[Proposition~5.3]{lines}, see~Remark~\ref{rem:bicomplex}. It is claimed there that the horizontal maps are equal to~\(\vn_2\). However, as we prove further, they are given by a non-trivial linear combination of~\(\vn_1\) and~\(\vn_2\). So, since~\(\vn_1 \neq 0\) in general, and these morphisms are unique, as we discuss in Remark~\ref{rem:E^bullet_unique}, we get a contradiction.
\end{remark}

Recall the morphism~\(\vn = \tfrac1{b+1}\vn_1 + \vn_2 \colon \wedge^{a} V^\vee \otimes \Sy^{b} \cU \to \wedge^{a + 1} V^\vee \otimes \Sy^{b - 1} \cU\)
defined in~\eqref{eq:def-v1}--\eqref{eq:def-v2}:
\begin{equation*}
\vn(\lambda \otimes P) = 
\tfrac1{b+1}(\lambda_1 \wedge \omega \otimes P_1 + 
\lambda_2 \wedge \omega \otimes P_2) + 
\lambda \wedge (\omega_1 \otimes P_1 + \omega_2 \otimes P_2).
\end{equation*}
We checked in Proposition~\ref{prop:ck-complex} that it induces a complex on the subbundles~\(\cE^{a,b} \subset \wedge^{a} V^\vee \otimes \Sy^{b} \cU\).
Now we check that it also induces a complex of the ambient bundles.

\begin{lemma}
The composition
\begin{equation*}
\wedge^{a} V^\vee \otimes \Sy^{b} \cU \xrightarrow{\ \vn_1 + (b + 1)\vn_2\ }
\wedge^{a + 1} V^\vee \otimes \Sy^{b - 1} \cU \xrightarrow{\ \vn_1 + b\vn_2\ }
\wedge^{a + 2} V^\vee \otimes \Sy^{b - 2} \cU
\end{equation*}
is zero.
\end{lemma}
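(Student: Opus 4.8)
The plan is to verify the vanishing pointwise. All the bundles and the morphisms $\vn_1,\vn_2$ are $\Sp(V)$-equivariant and $\IGr(2,V)$ is homogeneous, so it suffices to check the composition at one point $[U]\in\IGr(2,V)$; I would fix a basis $e_1,e_2$ of $U$ and work entirely with the explicit formulas~\eqref{eq:def-v1} and~\eqref{eq:def-v2}, exactly as in the proof of Proposition~\ref{prop:ck-complex}.

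Next I would expand the composition $(\vn_1+b\vn_2)\circ(\vn_1+(b+1)\vn_2)$ into the four pieces $\vn_1\vn_1$, $\vn_1\vn_2$, $\vn_2\vn_1$ and $\vn_2\vn_2$ (rightmost map applied first), with scalar coefficients $1$, $b+1$, $b$ and $b(b+1)$. Evaluated on a decomposable $\lambda\otimes P$ each piece is a double sum over $i,j\in\{1,2\}$ weighted by the second derivative $\partial_i\partial_j P$, which is symmetric in $i,j$; this symmetry drives every cancellation. Concretely: $\vn_2\circ\vn_2$ vanishes because it produces $\omega_i\wedge\omega_j$, antisymmetric in $i,j$; the part of $\vn_1\circ\vn_1$ in which $\lambda$ is contracted twice vanishes for the same reason, since $\iota_{e_j}\iota_{e_i}\lambda$ is antisymmetric; and in the mixed term $\vn_1\circ\vn_2$ the contraction performed by $\vn_1$ may land on the factor $\omega_j$ just created by $\vn_2$, producing $\iota_{e_i}\omega_j=\omega(e_j,e_i)$, which is zero because $U$ is isotropic. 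This last vanishing is the only place the isotropy hypothesis is used, and it is the same mechanism that puts $\bar\omega$ into $\wedge^2\cU^\perp$ in Proposition~\ref{prop:ck-complex}.

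After these reductions the three surviving contributions, coming from $\vn_1\vn_1$, $\vn_1\vn_2$ and $\vn_2\vn_1$, are all scalar multiples of one and the same expression of the form $\sum_{i,j}(\iota_{e_i}\lambda)\wedge\omega_j\wedge\omega\otimes\partial_i\partial_j P$. The whole statement then reduces to the numerical identity that the three attached weights, carrying the coefficients $1$, $b+1$, $b$ together with their signs, add up to zero; in the sign conventions fixed at the start this is a telescoping of the form $-1+(b+1)-b=0$. I expect this final sign-and-coefficient bookkeeping to be the only real obstacle: it is exactly here that the two differentials must be normalized by the distinct weights $b+1$ and $b$ rather than by a common constant, since with equal weights the surviving terms would fail to cancel. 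I would therefore fix the interior-product and wedge conventions once and for all at the beginning (the ones for which $\bar\omega=\omega+e^1\wedge\omega_1+e^2\wedge\omega_2$ lies in $\wedge^2\cU^\perp$) and record the three signs carefully, so that the survivors assemble to zero.
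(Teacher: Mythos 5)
Your proposal is correct and follows the paper's own proof essentially verbatim: a pointwise, basis-dependent expansion of the composition into the four pieces $\vn_1^2$, $\vn_1\circ\vn_2$, $\vn_2\circ\vn_1$, $\vn_2^2$, where $\vn_2^2$ and the double-contraction part of $\vn_1^2$ die against the symmetric $P_{ij}$, the extra piece of the mixed term $\vn_1\circ\vn_2$ dies by isotropy, and the three survivors are proportional to the common expression $\sum_{i,j}\lambda_i\wedge\omega_j\wedge\omega\otimes P_{ij}$ with coefficients cancelling via $1-(b+1)+b=0$ (your $-1+(b+1)-b=0$ is the same identity up to an overall sign depending on the contraction convention, which you correctly propose to normalize by requiring $\bar\omega\in\wedge^2\cU^\perp$).
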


\begin{proof}
The composition of these arrows is the sum of four maps:~\(b(b+1)\vn_2^2\), \(\vn_1^2\), 
\(b \, \vn_2 \circ \vn_1\), and~\((b+1)\vn_1 \circ \vn_2\). 
These compositions take~\(\lambda \otimes P\) to
\begin{align*}
b(b + 1)\lambda \wedge (\omega_1 \wedge \omega_2 \otimes P_{12} + \omega_2 \wedge \omega_1 \otimes P_{12});
\\[1ex]
\lambda_1 \wedge \omega_1 \wedge \omega \otimes P_{11} 
+ \lambda_2 \wedge \omega_1 \wedge \omega \otimes P_{12}\hphantom{;}
\\
+ \lambda_1 \wedge \omega_2 \wedge \omega \otimes P_{12} 
+ \lambda_2 \wedge \omega_2 \wedge \omega \otimes P_{22};
\\[1ex]
b\lambda_1 \wedge \omega \wedge (\omega_1 \otimes P_{11} + \omega_2 \otimes P_{12})\hphantom{;} 
\\
+ b\lambda_2 \wedge \omega \wedge (\omega_1 \otimes P_{12} + \omega_2 \otimes P_{22});
\\[1ex]
- (b + 1)\lambda_1 \wedge \omega \wedge (\omega_1 \otimes P_{11} + \omega_2 \otimes P_{12})\hphantom{;}
\\
- (b + 1)\lambda_2 \wedge \omega \wedge (\omega_1 \otimes P_{12} + \omega_2 \otimes P_{22}),
\end{align*}
where as usual we choose a basis~\(e_1,e_2\) in~\(U\) and write~\(\lambda_i\) and~\(\omega_i\) 
for the evaluation of the corresponding skew-form on~\(e_i\) 
and~\(P_{ij}\) for the second derivatives of~\(P\) in the directions~\(e_i\) and~\(e_j\).
It is easy to see that the first summand is zero and the last three summands cancel out.
\end{proof}

Consider also the unique~\(\GL(V)\)-equivariant morphism
\begin{equation*}
	\vn_0 \colon \wedge^{a} V^\vee \otimes \Sy^{b} \cU \to \wedge^{a - 1} V^\vee \otimes \Sy^{b + 1} \cU(1).
\end{equation*} 
It is, up to twist, the differential of Koszul complex~\eqref{eq:koszul-right} and given by the following composition of canonical morphisms:
\begin{multline*}
	\wedge^{a} V^\vee \otimes \Sy^{b} \cU \to 
	\wedge^{a-1} V^\vee \otimes V^\vee \otimes \Sy^{b} \cU 
	\to \wedge^{a-1} V^\vee \otimes \cU^\vee \otimes \Sy^{b} \cU \\
	\to \wedge^{a-1} V^\vee \otimes \Sigma^{b,-1} \cU 
	\cong \wedge^{a - 1} V^\vee \otimes \Sy^{b + 1} \cU(1),
\end{multline*}
At the point~\([U]\) with a basis~\(e_1, e_2\) of~\(U\) it acts by
\begin{equation}
\label{eq:def-vn0}
\vn_0(\lambda \otimes P) = (\lambda_1 \otimes e_2 \cdot P - \lambda_2 \otimes e_1 \cdot P) \cdot \det U^\vee,
\end{equation}
since the composition of the first three maps is~\(\sum \partial/\partial e_i \otimes e_i\) and the last isomorphism is~\(e_i \mapsto e_j \cdot \det U^\vee\) for~\(\{i,j\}=\{1,2\}\).

\begin{proposition}
\label{prop:bicomplex-appendix}
For any~\(a,b \ge 1\) the diagram 
\begin{equation*}
\xymatrix@C=8em{
\wedge^{a} V^\vee \otimes \Sy^{b} \cU \ar[r]^{(b + 2)(\vn_1 + (b+1)\vn_2)} \ar[d]_{\vn_0} & 
\wedge^{a + 1} V^\vee \otimes \Sy^{b - 1} \cU \ar[d]^{\vn_0}
\\
\wedge^{a - 1} V^\vee \otimes \Sy^{b + 1} \cU(1) \ar[r]^{b(\vn_1 + (b+2)\vn_2)} & 
\wedge^{a} V^\vee \otimes \Sy^{b} \cU(1) 
}
\end{equation*}
is anticommutative.
\end{proposition}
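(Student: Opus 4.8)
The plan is to verify the anticommutativity fiberwise. Since all four bundles in the square and all three maps $\vn_0,\vn_1,\vn_2$ are $\Sp(V)$-equivariant, it suffices to check the identity
$\vn_0\circ(b+2)(\vn_1+(b+1)\vn_2) = -\,b(\vn_1+(b+2)\vn_2)\circ\vn_0$
at a single point $[U]\in\IGr(2,V)$. So I would fix a basis $e_1,e_2$ of $U$ with lifts $e^1,e^2\in V^\vee$ and evaluate both composites on a generator $\lambda\otimes P$, $\lambda\in\wedge^aV^\vee$, $P\in\Sy^bU$, using the explicit local formulas \eqref{eq:def-v1}, \eqref{eq:def-v2} and \eqref{eq:def-vn0}. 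The one subtlety to pin down \emph{before} starting is the antiderivation sign in the contractions $\lambda_i,\omega_i$: to stay consistent with the computation in Proposition~\ref{prop:ck-complex}, where $\vn_1(\xi)=(b+1)\,\mu\wedge\omega\otimes Q$ comes out with no extra sign, one must use the convention in which $\mu\wedge e^i$ contracted against $e_i$ returns $\mu$. Getting this sign right everywhere is where the computation is most error-prone.

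Next I would expand the two paths and note their structural asymmetry. The clockwise path first applies $\vn_1+(b+1)\vn_2$, producing terms $[\lambda_i\wedge\omega+(b+1)\lambda\wedge\omega_i]\otimes P_i$ supported on first derivatives, and then $\vn_0$, which contracts the form once more and multiplies the polynomial by $e_1$ or $e_2$; the outcome is a combination of monomials $e_j\cdot P_i$ only, with no bare $P$. The counterclockwise path first applies $\vn_0$, giving $\lambda_1\otimes e_2P-\lambda_2\otimes e_1P$, and then $\vn_1+(b+2)\vn_2$, which differentiates; here the product rule $\partial_k(e_jP)=\delta_{jk}P+e_jP_k$ produces both monomials $e_j\cdot P_k$ \emph{and} bare terms in $P$. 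Thus the two sides must be reconciled through Euler's identity $\sum_i e_iP_i=bP$: the diagonal monomials $e_iP_i$ of the clockwise side have to assemble into the bare-$P$ terms of the other side. Throughout I would use the isotropy of $U$, which gives $\omega_i\in\cU^\perp$ and makes every double contraction of $\omega$ along $U$ vanish, to discard the spurious terms, together with the vanishing of repeated contraction against the same $e_i$.

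Finally I would collect coefficients monomial by monomial. The off-diagonal monomials $e_jP_i$ with $i\neq j$ each have a single source on either side, and one checks directly that the two contributions are opposite; this is where the scalars $(b+2)$ and $b$ on the horizontal arrows are first used, and a short numerical check (e.g. $\IGr(2,4)$ with $a=b=1$) confirms both the signs and that these particular scalars are forced. For the diagonal monomials and the bare-$P$ terms I would reorganize the form coefficients using $\bar\omega=\omega+e^1\wedge\omega_1+e^2\wedge\omega_2\in\wedge^2\cU^\perp$ from \eqref{eq:def-bar-omega}, exactly as in Proposition~\ref{prop:ck-complex}, so that the $\lambda_{ij}\wedge\omega$ and $\lambda\wedge\omega_i$ pieces combine cleanly; after applying Euler's identity the bare-$P$ contribution of the counterclockwise path cancels the assembled diagonal contribution of the clockwise path. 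I expect the genuine obstacle to be precisely this last bookkeeping step: keeping every antiderivation sign coherent and verifying that the three coefficients $b$, $b+1$, $b+2$ are exactly those that balance the bare-$P$/diagonal interplay, rather than any conceptual difficulty.
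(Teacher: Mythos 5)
Your proposal is correct and follows essentially the same route as the paper's proof: reduce to a single point by equivariance, expand both composites on \(\lambda\otimes P\) via the local formulas \eqref{eq:def-v1}, \eqref{eq:def-v2}, \eqref{eq:def-vn0}, and cancel term by term. You have in fact made explicit the two ingredients the paper leaves implicit in its closing sentence --- isotropy of \(U\) (so that \(\omega(e_1,e_2)=0\)) and Euler's identity \(e_1P_1+e_2P_2=bP\), which is exactly what reconciles the bare-\(P\) terms of the path through \(\Sy^{b+1}\cU(1)\) with the \(e_iP_i\) monomials of the other path --- so your plan, carried out, reproduces the paper's argument.
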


\begin{proof}
Using formulas~\eqref{eq:def-v1}, \eqref{eq:def-v2}, and~\eqref{eq:def-vn0} it is straightforward to check
that the composition~\mbox{\((\vn_1 + (b + 2)\vn_2) \circ \vn_0\)} takes~\(\lambda \otimes P\) to
\begin{align*}
& (\lambda_{12} \wedge \omega \otimes (P + e_1 \cdot P_1) 
+ \lambda_{12} \wedge \omega \otimes (P + e_2 \cdot P_2)
\\
+ &(b+2) \lambda_1 \wedge (\omega_1 \otimes e_2 \cdot P_1 + \omega_2 \otimes (P + e_2 \cdot P_2)) 
\\
- &(b+2) \lambda_2 \wedge (\omega_1 \otimes (P + e_1 \cdot P_1) + \omega_2 \otimes e_1 \cdot P_2)) \cdot \det U^\vee,
\end{align*}
while the composition~\(\vn_0 \circ (\vn_1 + (b + 1)\vn_2)\) takes~\(\lambda \otimes P\) to
\begin{align*}
& (\lambda_1 \wedge \omega_1 \otimes e_2 \cdot P_1 - \lambda_{12} \wedge \omega \otimes e_2 \cdot P_2 + \lambda_2 \wedge \omega_1 \otimes e_2 \cdot P_2
\\
- & \lambda_{12} \wedge \omega \otimes e_1 \cdot P_1 - \lambda_1 \wedge \omega_2 \otimes e_1 \cdot P_1 - \lambda_2 \wedge \omega_2 \otimes e_1 \cdot P_2
\\
- & (b + 1) \lambda_1 \wedge (\omega_1 \otimes e_2 \cdot P_1 + \omega_2 \otimes e_2 \cdot P_2)
\\
+ & (b + 1) \lambda_2 \wedge (\omega_1 \otimes e_1 \cdot P_1 + \omega_2 \otimes e_1 \cdot P_2)) \cdot \det U^\vee.
\end{align*}
It remains to note that the sum of the first expression multiplied by~\(b\)
and the second expression multiplied by~\(b + 2\) is zero.
\end{proof}

\begin{corollary}
\label{cor:bicomplex-maps}
The total complex of the diagram~\eqref{eq:bicomplex}, where the vertical arrows are given by~\(\vn_0\)
and the horizontal arrow
\begin{equation*}
\wedge^{t-b-c}V^\vee \otimes \Sigma^{c,-b}\cU^\vee \to \wedge^{t-b-c+1}V^\vee \otimes \Sigma^{c,1-b}\cU^\vee 
\end{equation*}
is given by the map~\((-1)^c\left(\tfrac{b}{(b+c)(b+c+1)}\vn_1 + \tfrac{b}{b+c}\vn_2\right)\),
is quasi-isomorphic to~\eqref{eq:ck-special}.
In particular, if~\(t = n -1\) the total complex of the diagram~\eqref{eq:bicomplex} is acyclic.
\end{corollary}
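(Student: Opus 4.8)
The plan is to verify first that the indicated arrows assemble into a genuine bicomplex, and then to extract the quasi-isomorphism from the spectral sequence of the associated double complex.

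For the first part, recall from \eqref{eq:sigma-s} that the term in column~\(b\), row~\(c\) of \eqref{eq:bicomplex}, namely \(\wedge^{t-b-c}V^\vee\otimes\Sigma^{c,-b}\cU^\vee\), is isomorphic to \(\wedge^{t-b-c}V^\vee\otimes\Sy^{b+c}\cU(c)\). Under this identification the horizontal arrow is, up to the scalar \((-1)^c\tfrac{b}{(b+c)(b+c+1)}\), the map \(\vn_1+(b+c+1)\vn_2\) of the ambient bundles. The vertical arrows \(\vn_0\) are the differentials of the right resolutions \eqref{eq:ck-x-my}, so the columns are complexes, in fact exact away from their bottom term. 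The Lemma preceding Proposition~\ref{prop:bicomplex-appendix} shows that the composition of two consecutive horizontal arrows, proportional to \(\vn_1+(b+c)\vn_2\) after \(\vn_1+(b+c+1)\vn_2\), vanishes, so the rows are complexes as well. Finally, Proposition~\ref{prop:bicomplex-appendix} establishes the anticommutativity of each elementary square: the horizontal arrows appearing there are the multiples \((b+c+2)(\vn_1+(b+c+1)\vn_2)\) and \((b+c)(\vn_1+(b+c+2)\vn_2)\) of the same two combinations, and one checks that the normalizing scalars \(\tfrac{b}{(b+c)(b+c+1)}\) together with the signs \((-1)^c\) are exactly those for which the total differential squares to zero. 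This scalar-and-sign bookkeeping, reconciling the multiples furnished by Proposition~\ref{prop:bicomplex-appendix} with the coefficients in the statement, is the one delicate point and is what forces the precise shape of the horizontal maps.

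For the quasi-isomorphism I would adjoin to \eqref{eq:bicomplex} the complex \(\cE_t^\bullet\) as an extra bottom row, via the canonical inclusions \(\cE^{t-b,b}\hookrightarrow\wedge^{t-b}V^\vee\otimes\Sy^b\cU\) and the differentials \(\vn\) of \eqref{eq:ck-special}. This is compatible with the bicomplex structure: by Proposition~\ref{prop:ck-complex} the map \(\vn\) carries \(\cE^{t-b,b}\) into \(\cE^{t-b+1,b-1}\), and at the bottom row \(c=0\) the horizontal arrow of \eqref{eq:bicomplex} is precisely \(\tfrac1{b+1}\vn_1+\vn_2=\vn\). With this augmentation each column becomes a full resolution \eqref{eq:ck-x-my}, hence exact, so the total complex of the augmented double complex is acyclic; this means the augmentation chain map identifies \(\cE_t^\bullet\) with the total complex of \eqref{eq:bicomplex} up to quasi-isomorphism. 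Equivalently, filtering by columns gives a spectral sequence whose first page is concentrated in the single row \(c=0\), where it equals \(\cE_t^\bullet\), and therefore degenerates.

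The final assertion is then immediate: for \(t=n-1\) the complex \(\cE_{n-1}^\bullet\) is acyclic by Theorem~\ref{thm:bicomplex} (this value of \(t\) lies in the \enquote{otherwise} case, the middle map \(\wedge^{t-2}\cS\to\wedge^t\cS\) being an isomorphism), whence the total complex of \eqref{eq:bicomplex} is acyclic. I expect the main obstacle to be the verification in the second paragraph that the explicit scalars and signs are compatible with the anticommutativity of Proposition~\ref{prop:bicomplex-appendix}; once the diagram is known to be a bicomplex with columns the resolutions \eqref{eq:ck-x-my}, the identification of the total complex with \(\cE_t^\bullet\) is the standard acyclic-assembly argument.
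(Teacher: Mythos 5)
Your proposal is correct and takes essentially the route the paper intends: the corollary is stated there without a separate proof precisely because it is the assembly of the preceding Lemma (rows are complexes), Proposition~\ref{prop:bicomplex-appendix} (compatibility of the squares, applied with symmetric degree \(b+c\)), Proposition~\ref{prop:ck-complex} (the \(c=0\) row restricts to the differentials \(\vn\) of~\(\cE_t^\bullet\) on the subbundles \(\cE^{t-b,b}=\ker\vn_0\)), and the standard column-augmentation argument, together with Theorem~\ref{thm:bicomplex} for the case \(t=n-1\) --- exactly the steps you spell out. The one point to phrase carefully is the sign bookkeeping: with the factors \((-1)^c\) included, Proposition~\ref{prop:bicomplex-appendix} makes the elementary squares \emph{commute} rather than anticommute, so \enquote{total complex} must be taken with the usual extra sign \((-1)^c\) on the horizontal differential of a commutative bicomplex (equivalently, dropping the \((-1)^c\) gives anticommuting squares and the plain sum differential), a convention ambiguity present in the statement itself rather than a gap in your argument.
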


\printbibliography

\textsc{HSE University, Russian Federation}

\end{document}